\newcommand{\MO}[1]{{\color{black} #1}}
\newtheorem{proposition}{Proposition}
\newtheorem{lemma}[proposition]{Lemma}
\newtheorem{theorem}[proposition]{Theorem}
\theoremstyle{definition}
\newtheorem{remark}[proposition]{Remark}
\newtheorem{definition}[proposition]{Definition}
\newtheorem{example}[proposition]{Example}
\newcommand{\II}{\mathds{I}}
\newcommand{\PP}{\mathds{P}}
\newcommand{\RR}{\mathds{R}}
\newcommand{\ns}[1]{\textcolor{black}{#1}}
\newcommand{\cc}{\cellcolor{black!10!white}}
\begin{document}

\title{Freedom in constructing quasi-copulas vs.\ copulas }%
\author{Matja\v{z} Omladi\v{c}}%
\address{Institute of Mathematics, Physics and Mechanics, Ljubljana, Slovenia}%
\email{matjaz@omladic.net}%
\author{Nik Stopar}%
\address{University of Ljubljana, Faculty of Civil and Geodetic Engineering, University of Ljubljana, Faculty of Mathematics and Physics, and Institute of Mathematics, Physics and Mechanics, Ljubljana, Slovenia}%
\email{nik.stopar@fgg.uni-lj.si}%

\thanks{The first author acknowledges financial support from the Slovenian Research Agency (research core funding No. P1-0448). The second author acknowledges financial support from the Slovenian Research Agency (research core funding No. P1-0222 and project No. J1-50002.)}
\subjclass[2020]{Primary 62H05, 60A99, Secondary 06B23}%
\keywords{copula; quasi-copula; shuffle of min; patch; lattice.}%

\begin{abstract}
\ns{The main goal of this paper is to study the extent of freedom one has in constructing quasi-copulas vs.\ copulas. Specifically, it exhibits} 
three construction methods for quasi-copulas based
on recent developments: a representation of multivariate quasi-copulas by
means of infima and suprema of copulas, an extension of a classical result on
shuffles of min to the setting of quasi-copulas, and a construction method for
quasi-copulas obeying a given signed mass pattern on a patch.

\end{abstract}
\maketitle

\section{Introduction}\label{sec:intro}

Copulas have become an important theoretical notion and a very powerful practical tool in many applications. They are simply distribution functions with uniform marginal distributions. 
However, every distribution function can be expressed via a copula and its margins (Sklar \cite{Skla}).

 Quasi-copulas may be viewed as pointwise infima and suprema of copulas (their exact definition will be given later).
While questions on copulas are attracting more and more attention, there are many interesting and important problems in quasi-copulas still unanswered.
An excellent list of problems in multivariate quasi-copulas, \MO{sometimes popularly} called the "Hitchhiker’s", is given in \MO{\cite[Section 9]{ArGaMeDeBa}}. As a matter of fact, the result of \cite{Klemetal} we are using here may be seen as a solution to the Hitchhiker’s problem 2.
\ns{
Actually, it is shown in \cite{Klemetal} that for any two points $\mathbf{u}$ and $\mathbf{v}$ and any quasi-copula $Q$ there is a copula $C$ that attains the same values at $\mathbf{u}$ and $\mathbf{v}$ as $Q$.
Another two 
of these problems are closely related to our work presented here, namely problem 6 (cf.\ explanation at the beginning of Section \ref{sec:patch+-}) and problem 3 whose brief discussion follows. }

In the bivariate case the Dedekind-MacNeille completion of the poset of bivariate copulas (w.r.t. the pointwise order) is just the class of bivariate quasi-copulas \cite{NeUbFl2}.
In the multivariate setting with dimension $d \geqslant 3$ this is not true \cite{FeSaNeUbFl}.
There exist at least two copies of the Dedekind-MacNeille completion of multivariate copulas inside the family of multivariate quasi-copulas of the same dimension, as was shown recently \cite{OmSt5} \ns{(thus solving problem 3)}.
These two families are relatively very small, and the lattice operations on the completions extend but are not equal to the starting operations of pointwise infimum and supremum.
This raises the question whether the family of quasi-copulas is too big in higher dimensions.
One of the \MO{
aims} of this paper is to answer this question in the negative at least for the pointwise order. Our answer uses an even more recent result on copulas with given values at two points \cite{Klemetal} \ns{(linked to Hitchhiker's problem 2 as mentioned above)}.

The recent development in the area includes: extensions of Sklar’s theorem from the setting of distributions and copulas to the setting of quasi-distributions and quasi-copulas \cite{OmSt2,OmSt4}, measure theoretic properties of quasi-copulas \cite{NeQuMoRoLaUbFl1,DeBaMeUbFl,St,NeQuMoRoLaUbFl3,FeSaRoLaUbFl}, approximations with quasi-copulas and related topological properties \cite{NeQuMoRoLaUbFl1,DuFeSaTr,Tru}, and patchwork constructions of quasi-copulas \cite{QuMoSe,DeBaDeMe,KoBuKoOmSt}.
Perhaps the most important among these results from the point of view of applications are constructions of multivariate quasi-copulas, since they are helping us understand these objects in a more perceptible way.

\ns{The main goal of this paper is to study the extent of freedom one has in constructing quasi-copulas vs.\ copulas. On one hand there is the amazing freedom in constructing quasi-copulas with respect to construction of negative volumes both in magnitude and distribution (cf.\ Section \ref{sec:patch+-}). On the other hand, some of the strongest copula construction methods, s.a.\ shuffles, when brought (for the first time) to the environment of quasi-copulas collide importantly with their structure, so that only a sophisticated approach can bring us to beneficial results. We exhibit here some additional specific aims. 
} First, 
the construction method developed in \cite{Klemetal} to solve the Hitchhiker’s problem~2 helps us give a much better feeling on how big the family of multivariate quasi-copulas is compared to the family of copulas of the same dimension in the sense of pointwise order. \ns{
We show that every quasi-copula can be obtained as a pointwise supremum of a set of pointwise infima of sets of copulas. It can also be obtained as a pointwise infimum of a set of pointwise suprema of sets of copulas.}
Next, we deepen this result by combining it with one of the most important source of constructing copulas, shuffles. Our method is nontrivial since the shuffle of a quasi-copula is not always a quasi-copula unlike in the copula case where their shuffles are necessarily copulas.
To make the presentation clearer, we first stage the bivariate approach. When extending it to the multivariate case, we need an additional notion of tensor products of discrete quasi-copulas.
This concept seems to be new and of independent interest. The third construction produces a quasi-copula with a given signed pattern of masses on a patch. The construction method extends a calculation of a concrete counterexample in the theory of imprecise copulas, i.e., an example of a (discrete) imprecise copula that does not avoid sure loss, see \cite{OmSt1}.
The paper exhibits the first of the methods mentioned above in Section 2, the second one in Sections 3 and 4, and the third one in Section 5.

\section{Lattice-theoretic properties of multivariate quasi-copulas}\label{sec:mot}

Although the reader is assumed familiar with basic copula theory \cite{DuSe,Nels}, we
will first give the definitions of copulas and quasi-copulas, and set some notation
on the way. Let $\II = [0, 1]$. For two points $\mathbf{x}, \mathbf{y} \in \RR^d$ we write $\mathbf{x} \leqslant \mathbf{y}$ if $x_i \leqslant y_i$ for every $i \in [d]$, where $[d] = \{1,2,\ldots,d\}$. If $\mathbf{x} \leqslant \mathbf{y}$, a \emph{$d$-box} (rectangle) is the set
$$[\mathbf{x}, \mathbf{y}] = [x_1, y_1] \times \ldots \times [x_d, y_d] \subseteq \RR^d.$$
Denote also by $\textup{ver} [\mathbf{x}, \mathbf{y}] = \{x_1, y_1\} \times \ldots \times \{x_d, y_d\}$
the \emph{set of vertices} of the box $[\mathbf{x}, \mathbf{y}]$.
For a real valued function $A$ and  $d$-box $R = \MO{ [\mathbf{x}, \mathbf{y}]}$ we define the \emph{$A$-volume} (or simply the \emph{volume} if $A$ is understood) of the box $R$ by
$$V_A(R) =\sum_{v \in \textup{ver} R} \textup{sign}_R(\mathbf{v}) A(\mathbf{v}),$$
where $\textup{sign}_R(\mathbf{v})$ equals $1$ if $v_j = x_j$ for an even number of indices, and $-1$ otherwise. A $d$-variate \emph{quasi-copula} is a function
$A \colon \II^d \to \II$ that satisfies the following three conditions:
\begin{enumerate}[$(i)$]
\item for every $j \in [d]$ we have $A(1,\ldots, 1, u_j, 1,\ldots, 1) = u_j$,
\item $A$ is increasing in each of its variables, \MO{ i.e., for every $j\in[d]$ and for every point $\mathbf{u}_j=(u_1,\ldots,u_{j-1}, u_{j+1}, \ldots,u_d)\in \II^{d-1}$ the function ${f}_j:t\mapsto A(u_1,\ldots,u_{j-1},t, u_{j+1}, \ldots,u_d)$ is increasing;}
\item $A$ satisfies the 1-Lipschitz condition, \MO{ i.e., if $\mathbf{u},\mathbf{v} \in\II^d$, then
      \[
        |A(\mathbf{v})-A(\mathbf{u})|\leqslant\sum_{j=1}^{n}|v_j-u_j|.
      \]}
\end{enumerate}
\MO{ Condition $(iii)$ is sometimes replaced by an equivalent condition
\begin{enumerate}[$(i)$]
\item[$(iii')$] $A$ has the 1-Lipschitz property in each direction, i.e., for every $j\in[d]$ and every $\mathbf{u}_j$ as in point $(ii)$, the there defined function $f_j$ is 1-Lipschitz meaning that it satisfies $|{f}_j(u)-{f}_j(v)|\leqslant|u-v|$ for all $u,v\in\II$;
\end{enumerate}
(We prefer to use this form of the condition in the sequel.)}
It is well-known and not hard to see that $A$ then satisfies also 
\begin{enumerate}[$(i)$]
\item[$(iv)$] for every $j \in [d]$ and for every point $(u_1,\ldots, u_{j-1}, u_{j+1},\ldots, u_d) \in \II^{d-1}$ we have $A(u_1,\ldots, u_{j-1}, 0, u_{j+1},\ldots, u_d) = 0$.
\end{enumerate}
Function $A$ is a \emph{copula} if it satisfies Conditions $(i)$, $(iv)$, and
\begin{enumerate}[$(i)$]
\item[$(ii')$] for every rectangle $R \subseteq \II^d$ its $A$-volume is nonnegative.
\end{enumerate}
\MO{In Sections \ref{sec:gen} and \ref{sec:patch+-} we need the notion of \emph{discrete (quasi-)copula}; this will mean, given a finite mesh $\Delta\subseteq \II^d$, simply a (quasi-)copula restricted to the subset $\Delta$.
}\\

Here is a simple observation, which is, nevertheless, a key to our first result.

\begin{proposition}\label{prop:source}
    Let $\mathcal{D}$ be a family of copulas such that for every quasi-copula $Q$ and every two points $\mathbf{x},\mathbf{z}\in\II^d$ there exists $D_{\mathbf{x},\mathbf{z}}^Q\in\mathcal{D}$ such that $D_{\mathbf{x},\mathbf{z}}^Q(\mathbf{x})=Q(\mathbf{x})$ and $D_{\mathbf{x},\mathbf{z}}^Q(\mathbf{z})=Q(\mathbf{z})$. Then
  \[
    Q(\mathbf{u})=\inf_{\mathbf{z}\in\II^d}\left(\sup_{\mathbf{x}\in\II^d} D_{\mathbf{x},\mathbf{z}}^Q(\mathbf{u})\right) \quad \mbox{and}\quad Q(\mathbf{u})=\sup_{\mathbf{z}\in\II^d}\left(\inf_{\mathbf{x}\in\II^d} D_{\mathbf{x},\mathbf{z}}^Q(\mathbf{u})\right)
  \]
  for all $\mathbf{u} \in \II^d$.
\end{proposition}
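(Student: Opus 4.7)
The plan is to exploit the prescribed two-point interpolation very directly: once the copulas $D_{\mathbf{x},\mathbf{z}}^Q$ are forced to agree with $Q$ at $\mathbf{x}$ and at $\mathbf{z}$, feeding in the choices $\mathbf{x}=\mathbf{u}$ and $\mathbf{z}=\mathbf{u}$ collapses the iterated infimum--supremum onto the single value $Q(\mathbf{u})$. So the proof reduces to two easy bounding arguments; there is no serious obstacle.

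To establish $Q(\mathbf{u})=\inf_{\mathbf{z}}\sup_{\mathbf{x}}D_{\mathbf{x},\mathbf{z}}^Q(\mathbf{u})$, I will first verify the inequality $\inf_{\mathbf{z}}\sup_{\mathbf{x}}D_{\mathbf{x},\mathbf{z}}^Q(\mathbf{u})\geqslant Q(\mathbf{u})$. For an arbitrary $\mathbf{z}\in\II^d$, I pick $\mathbf{x}=\mathbf{u}$ in the inner supremum: by hypothesis $D_{\mathbf{u},\mathbf{z}}^Q(\mathbf{u})=Q(\mathbf{u})$, whence $\sup_{\mathbf{x}\in\II^d}D_{\mathbf{x},\mathbf{z}}^Q(\mathbf{u})\geqslant Q(\mathbf{u})$. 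Taking the infimum over $\mathbf{z}$ preserves the inequality. For the reverse inequality I specialize the outer infimum by choosing $\mathbf{z}=\mathbf{u}$. Then for every $\mathbf{x}\in\II^d$ the defining property gives $D_{\mathbf{x},\mathbf{u}}^Q(\mathbf{u})=Q(\mathbf{u})$, so $\sup_{\mathbf{x}\in\II^d}D_{\mathbf{x},\mathbf{u}}^Q(\mathbf{u})=Q(\mathbf{u})$, which bounds the infimum from above by $Q(\mathbf{u})$.

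The second identity $Q(\mathbf{u})=\sup_{\mathbf{z}}\inf_{\mathbf{x}}D_{\mathbf{x},\mathbf{z}}^Q(\mathbf{u})$ follows by the same template with inequalities reversed. Fixing $\mathbf{z}$ and taking $\mathbf{x}=\mathbf{u}$ in the inner infimum yields $\inf_{\mathbf{x}}D_{\mathbf{x},\mathbf{z}}^Q(\mathbf{u})\leqslant Q(\mathbf{u})$, and taking the supremum over $\mathbf{z}$ gives the upper bound. Conversely, choosing $\mathbf{z}=\mathbf{u}$ forces $D_{\mathbf{x},\mathbf{u}}^Q(\mathbf{u})=Q(\mathbf{u})$ for all $\mathbf{x}$, so the inner infimum equals $Q(\mathbf{u})$ and the supremum over $\mathbf{z}$ is at least $Q(\mathbf{u})$.

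Note that the argument uses nothing beyond the hypothesis itself — neither the $1$-Lipschitz property nor any further structure of copulas or quasi-copulas enters. The only subtlety to flag clearly in the write-up is the role played by the two indices: the specialization $\mathbf{x}=\mathbf{u}$ controls one direction in each identity, while the specialization $\mathbf{z}=\mathbf{u}$ controls the other, and it is precisely the two-point interpolation assumption on $\mathcal{D}$ that makes both specializations available simultaneously.
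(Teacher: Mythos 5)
Your proposal is correct and follows essentially the same route as the paper: both arguments hinge on specializing $\mathbf{x}=\mathbf{u}$ to get the lower bound on the inner supremum and $\mathbf{z}=\mathbf{u}$ to collapse the outer infimum, the only cosmetic difference being that the paper packages the inner supremum as an auxiliary quasi-copula $A_{\mathbf{z}}^{Q}$ (a fact not actually needed for the equality, as you rightly observe).
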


\begin{proof}
 For any $\mathbf{z} \in \II^d$ the function $A_\mathbf{z}^Q(\mathbf{u})=\sup_{\mathbf{x}\in\II^d} D_{\mathbf{x},\mathbf{z}}^Q(\mathbf{u})$ is a quasi-copula that satisfies $A_\mathbf{z}^Q(\mathbf{z})=Q(\mathbf{z})$ and $A_\mathbf{z}^Q(\mathbf{u}) \geq Q(\mathbf{u})$ for any other point $\mathbf{u} \in \II^d$.
Hence, $\inf_{\mathbf{z}\in\II^d} A_\mathbf{z}^Q(\mathbf{u})$ has to be equal to $Q(\mathbf{u})$ for all $\mathbf{u} \in \II^d$. This proves the first of the two equalities and a similar argument works for the second one.
\end{proof}

It was proved recently in \cite[Proposition~8.6]{Klemetal} that the family of all copulas satisfies the condition on family $\mathcal{D}$ in Proposition~\ref{prop:source}, which immediately implies the following.

\begin{theorem}\label{thm:copulas}
Every \MO{quasi-copula} can be obtained as a pointwise supremum of a set of pointwise infima of sets of copulas. It can also be obtained as a pointwise infimum of a set of pointwise suprema of sets of copulas. So, for every quasi-copula $Q$ there exist families of copulas $\{\mathcal{C}_i\}_{i\in I}$ and $\{\mathcal{\widetilde{C}}_j\}_{j\in J}$ such that
\[
    Q(\mathbf{u})=\inf_{i\in I}\sup_{C\in\mathcal{C}_i}C(\mathbf{u})\quad\mbox{and}\quad Q(\mathbf{u})=\sup_{j\in J}\inf_{C\in\mathcal{\widetilde{C}}_j}C(\mathbf{u}).
\]
\end{theorem}

In other words, $d$-variate quasi-copulas for $d \geq 3$ can be obtained from $d$-variate copulas via the application of pointwise infima and suprema in two steps.
In contrast, for $d=2$ only one step is needed, since any bivariate quasi-copula is simultaneously an infimum of bivariate copulas and a supremum of bivariate copulas \cite{NeUbFl2}.

Together with the results of \cite{OmSt5}, this finally explains the role of quasi-copulas in the study of copulas when $d \geqslant 3$.
If one wishes to work with the Dedekind-MacNeille completion of copulas, then not all quasi-copulas are needed, since the completion is a much smaller set. However, although the order in the completion is the pointwise order, the lattice operations are not the pointwise infimum and supremum.
On the other hand, if one insists on working with the operations of pointwise infimum and supremum, as is usually the case, then all quasi-copulas are needed.

Next we strengthen the above result by showing that the same holds if we replace the set of copulas with the set of shuffles of min. Again only two steps are needed to generate all quasi-copulas.
\MO{The shuffles of min were first introduced by Mikusi\'{n}ski et al. \cite{MiShTa} in the bivariate case and then extended to the multivariate case in Mikusi\'{n}ski and Taylor \cite{MiTa} (see also \cite{DuFeSa} for a slightly more general definition, which we will not need here). Observe that in the bivariate case two types of shuffles were introduced, straight and flip, but only the first one of the two was extended to the multivariate case. So, we will use the term ($d$-variate) shuffle of min in all dimensions meaning straight shuffle of min in case $d=2$.}

For the formal definitions we refer the reader to the aforementioned papers, but informally a shuffle of min can be described in the following way.
We start with the unit cube $\II^d$ with mass distributed according to copula $M$, i.e., mass is uniformly distributed on the main diagonal\MO{, so that} $x_1=x_2=\ldots=x_d$.
Then we slice the cube with a finite number of hyperplanes of the form $x_1=c$, rearrange the slices in a different order by means of a permutation, and recombine them to obtain a copy of $\II^d$, in which the mass is now redistributed.
We repeat this process for variables $x_2,\ldots,x_d$, possibly choosing different numbers of hyperplanes and a different permutation each time. We end up with $\II^d$ containing a redistribution of the mass of copula $M$. The copula that has mass distributed in such a way is called a \emph{shuffle of min}.\\[2mm]

\ns{
Let $(\Xi(\II^d),\|.\|_\infty)$ be the space of all real-valued continuous functions defined on $\II^d$ equipped with the supremum norm $\|f\|_\infty=\sup_{\mathbf{u} \in \II^d} |f(\mathbf{u})|$.
It is well known that the set of all $d$-variate copulas \MO{and} the set of all $d$-variate quasi-copulas are compact subsets of $(\Xi(\II^d),\|.\|_\infty)$. Furthermore, the following result was proved in \cite{MiTa}.}

\ns{
\begin{theorem}\label{thm:miku}
The set of all shuffles of min is a dense subset of the set of all $d$-variate copulas.
\end{theorem}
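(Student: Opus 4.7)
The plan is to deduce this density result directly from Proposition~\ref{prop:som} together with the Lipschitz property of copulas. Fix an arbitrary copula $C$ and $\eps>0$. I would choose a positive integer $n$ with $1/n<\eps/(2d)$ and take the uniform grid $S_n=\{0,1/n,2/n,\ldots,1\}^d\subseteq\II^d$, which is finite. By Proposition~\ref{prop:som}, there is a straight shuffle of min $\widehat C$ with $\widehat C(\mathbf{u})=C(\mathbf{u})$ for every $\mathbf{u}\in S_n$.

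Next I would control $|C(\mathbf{u})-\widehat C(\mathbf{u})|$ at arbitrary points $\mathbf{u}\in\II^d$ by the Lipschitz argument. For any $\mathbf{u}\in\II^d$ there is a grid point $\mathbf{s}\in S_n$ with $\|\mathbf{u}-\mathbf{s}\|_\infty\leqslant 1/(2n)$. Since both $C$ and $\widehat C$ are quasi-copulas, they are $1$-Lipschitz in each coordinate, so iterating the one-variable bound gives
\[
|C(\mathbf{u})-C(\mathbf{s})|\;\leqslant\;\sum_{i=1}^{d}|u_i-s_i|\;\leqslant\;\frac{d}{2n},
\]
and the same estimate holds for $\widehat C$. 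The triangle inequality then yields
\[
|C(\mathbf{u})-\widehat C(\mathbf{u})|\;\leqslant\;|C(\mathbf{u})-C(\mathbf{s})|+|C(\mathbf{s})-\widehat C(\mathbf{s})|+|\widehat C(\mathbf{s})-\widehat C(\mathbf{u})|\;\leqslant\;\frac{d}{n}<\eps,
\]
where the middle term vanishes because $\mathbf{s}\in S_n$. Taking the supremum over $\mathbf{u}$ gives $\|C-\widehat C\|_\infty<\eps$, which proves density.

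There is essentially no obstacle, since the combinatorial heavy lifting has already been done in Proposition~\ref{prop:som}; the only point that merits care is verifying that the matching function produced there is indeed a shuffle of min (this is built into its construction) and that both functions in the triangle inequality are genuinely $1$-Lipschitz in each variable, so that the $d$-dimensional Lipschitz bound with constant $d$ in the $\ell^\infty$-norm applies. If a sharper approximation rate were sought, one could alternatively invoke the $\ell^1$-Lipschitz bound directly, but for density the crude estimate above is enough.
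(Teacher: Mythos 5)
Your argument is correct. Note, however, that the paper does not prove Theorem~\ref{thm:miku} at all: it simply quotes it as a known result of Mikusi\'{n}ski and Taylor \cite{MiTa}, so there is no in-paper proof to compare against. What you have done is supply a self-contained derivation from Proposition~\ref{prop:som}, and this is legitimate and non-circular, since that proposition is proved in the paper by an explicit construction that does not rely on the density statement. Your two ingredients --- interpolation of $C$ on a uniform grid of mesh $1/n$ by a (straight) shuffle of min, followed by the $\ell^1$-type Lipschitz estimate $|C(\mathbf{u})-C(\mathbf{s})|\leqslant\sum_{i}|u_i-s_i|$ applied to both $C$ and $\widehat C$ --- are exactly the standard route to this density result, and indeed the same Lipschitz bookkeeping the paper itself uses in the proofs of Theorem~\ref{thm:bishuffle} and Proposition~\ref{prop:tensor}. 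The constants are handled correctly ($d/n<\eps$ for $n>2d/\eps$, with room to spare), a straight shuffle of min is in particular a shuffle of min, and the grid is finite so Proposition~\ref{prop:som} applies. The only cosmetic slip is calling $C$ and $\widehat C$ quasi-copulas when they are copulas; the Lipschitz property holds either way.
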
}

\MO{By modifying the construction from \cite[\S 6]{MiTa} we obtain the following result which shows that we can interpolate any finite set of values of a copula with a shuffle of min. }

\begin{proposition}\label{prop:som}
For any copula $C$ and any finite set of points $S \subseteq \II^d$ there exists a 
shuffle of min $\MO{\widetilde{C}}$ such that $\MO{\widetilde{C}}(\mathbf{u})=C(\mathbf{u})$ for all $\mathbf{u} \in S$.
\end{proposition}

\begin{proof}
For every $k\in [d]$ let $S_k$ be the set of $k$-th coordinates of points in $S$. By adding some elements to the sets $S_k$, if needed, we may assume that all of them contain $0$ and $1$ and are of the same size $m+1$.
Denote $S_k=\{x_0^k,x_1^k,\ldots,x_m^k\}$, where $0=x_0^k<x_1^k<\ldots<x_m^k=1$.
For each $d$-tuple $\mathbf{i}=(i_1,i_2,\ldots,i_d)\in [m]^d$ let $R_\mathbf{i}=[x_{i_1-1}^1,x_{i_1}^1] \times [x_{i_2-1}^2,x_{i_2}^2]\times \ldots \times [x_{i_d-1}^d,x_{i_d}^d]$ and $\mu_\mathbf{i}=\mu(R_\mathbf{i})$ where $\mu$ is the probability measure induced by copula $C$.
Denote the lexicographic order on $d$-tuples by $<_\text{lex}$. For any $k \in [d]$ and any $d$-tuple $\mathbf{i} \in [m]^d$ define
$$q_\mathbf{i}^k=x_{i_k-1}^k+\sum_{\substack{\mathbf{j}<_\text{lex}\mathbf{i}\\j_k=i_k}} \mu_\mathbf{j}.$$
Note that the points $q_\mathbf{i}^k$ with $i_k=p$ divide the interval $[x_{p-1}^k,x_p^k]$ into subintervals
$J_\mathbf{i}^k=[q_\mathbf{i}^k,q_\mathbf{i}^k+\mu_\mathbf{i}]$ with $i_k=p$. For fixed $i$ the length of $J_\mathbf{i}^k$ is equal to $\mu_\mathbf{i}$ and is independent of $k$.
Hence, for each $\mathbf{i} \in [m]^d$, we may distribute mass $\mu_\mathbf{i}$ uniformly along the main diagonal of the $d$-dimensional cube $J_\mathbf{i}^1 \times J_\mathbf{i}^2 \times\ldots\times J_\mathbf{i}^d$. We denote the grounded function that distributes mass in such a way by $\MO{\widetilde{C}}$.
For every $k \in [d]$, the union of the collection of intervals $\{J_\mathbf{i}^k ~|~ \mathbf{i}\in [m]^d\}$ is equal to $\II$ and two intervals in the collection intersect in at most one point.
This implies that $\MO{\widetilde{C}}$ is a shuffle of min.
Clearly, the only mass that $\MO{\widetilde{C}}$ distributes inside $d$-box $R_\mathbf{i}$ is the mass $\mu_\mathbf{i}$ in its subbox $J_\mathbf{i}^1 \times J_\mathbf{i}^2 \times\ldots\times J_\mathbf{i}^d$, where $\mathbf{i}=(i_1,i_2,\ldots,i_d)$.
Consequently, $V_C(R_\mathbf{i})=\mu_\mathbf{i}=V_\MO{\widetilde{C}}(R_\mathbf{i})$ and therefore copulas $C$ and $\MO{\widetilde{C}}$ have equal values in all the vertices of the boxes $R_\mathbf{i}$, $\mathbf{i}\in [m]^d$.
In particular, $C$ and $\MO{\widetilde{C}}$ coincide on the set $S$.
\end{proof}

\MO{
Note that Proposition~\ref{prop:som} is a strengthening of Theorem~\ref{thm:miku}. Indeed, let us fix an integer $n$ and take $S$ to be a finite mesh $S=\left\{0,\dfrac{1}{n},\dfrac{2}{n},\ldots,1\right\}^d$. Now, if copula $\widetilde{C}$ interpolates copula $C$ on $S$, then the $1$-Lipschitz condition of copulas implies $|\widetilde{C}(\mathbf{u})-C(\mathbf{u})| \leqslant \dfrac{2d}{n}$ for all $\mathbf{u}\in\II^d$, so that $\widetilde{C}$ approximates $C$ uniformly on $\II^d$.}
With the help of Proposition~\ref{prop:som} we can now strengthen the result of Theorem~\ref{thm:copulas}.

\begin{theorem}\label{thm:shuffles}
Every quasi-copla can be obtained as a pointwise supremum of a set of pointwise infima of sets of 
shuffles of min. It can also be obtained as a pointwise infimum of a set of pointwise suprema of sets of 
shuffles of min. So, for every quasi-copula $Q$ there exist families of shuffles of min $\{\mathcal{M}_i\}_{i\in I}$ and $\{\mathcal{\widetilde{M}}_j\}_{j\in J}$ such that
\[
    Q(\mathbf{u})=\inf_{i\in I}\sup_{C\in\mathcal{M}_i}C(\mathbf{u})\quad\mbox{and}\quad Q(\mathbf{u})=\sup_{j\in J}\inf_{C\in\mathcal{\widetilde{M}}_j}C(\mathbf{u}).
\]
\end{theorem}

\begin{proof}
Let $Q$ be an arbitrary quasi-copula and $\mathbf{x}$ and $\mathbf{z}$ arbitrary points in $\II^d$. By \cite[Proposition~8.6]{Klemetal} there exists a copula $C$ such that $C(\mathbf{x})=Q(\mathbf{x})$ and $C(\mathbf{z})=Q(\mathbf{z})$.
Let $S=\{\mathbf{x},\mathbf{z}\}$. Then by Proposition~\ref{prop:som} there exists a shuffle of min $\MO{\widetilde{C}}$ such that $\MO{\widetilde{C}}(\mathbf{x})=C(\mathbf{x})=Q(\mathbf{x})$ and $\MO{\widetilde{C}}(\mathbf{z})=C(\mathbf{z})=Q(\mathbf{z})$.
This shows that the family of all shuffles of min satisfies the condition on family $\mathcal{D}$ of Proposition~\ref{prop:source} and a direct application of the proposition gives the desired result.
\end{proof}

We remark that the above proof actually shows that the complete lattice of quasi-copulas is generated by an even smaller set of $3$-slice shuffles of min, i.e., shuffles obtained by splitting $\II^d$ into $3$ slices in each of the coordinates, which means that $\II^d$ is divided into $3^d$ boxes altogether.

\section{Shuffles of quasi-copulas and bivariate approximation}\label{sec:shuff}

In this section we investigate approximations of quasi-copulas with shuffles. \MO{However, let us first start by a brief overview of shuffles of copulas.}
\ns{
The motivation for introducing and studying shuffles of copulas goes back to Vitale's seminal paper \cite{Vi}.
The first formal definition of a shuffle was given by Mikusi\' nski et al. \cite{MiShTa}, who introduced shuffles of min in the bivariate setting, by shuffling the mass distribution of $M$ in the $x$-direction.
They also gave a stochastic interpretation of such shuffles in the lines of Vitale's work.
Mikusi\' nski and Taylor later extended the original definition to the multivariate setting \cite{MiTa}, where shuffling is performed in the directions of all coordinate axis simultaneously.
Durante et al. \cite{DuSaSe} interpreted bivariate shuffles of min using interval exchange transformations and the push-forward of the measure induced by $M$.
This allowed them to extend the existing definition by replacing interval exchange transformations with general Lebesgue-measure-preserving transformations and replacing copula $M$ with an arbitrary copula $C$.
Trutschnig and Fern\'{a}ndez S\'{a}nchez \cite{TrFeSa2} established a connection between this type of shuffles and Markov operators, and used it to study iterative properties of shuffles.
A further extension of the above definition that includes shuffling in both directions was considered recently by Griessenberger et al. \cite{GrFeSaTr}, who investigated the properties of shuffles with pairs of L\"{u}roth maps.
}

\ns{
Theorems \ref{thm:copulas} and \ref{thm:shuffles} suggest a possibility of an indirect stochastic interpretation for quasi-copulas, since they are infima and suprema of sets of copulas.
This opens an option of shuffling quasi-copulas indirectly by shuffling the corresponding sets of copulas as follows. For simplicity we limit our considerations to the bivariate case and to shuffling in $x$-direction only.
Fix a shuffling mechanism $S$ by choosing a partition $J_1,J_2,\ldots,J_n$ of $(0,1]$ and a permutation $\pi$ of the index set $\{1,2,\ldots,n\}$. Here $J_i=(x_{i-1},x_i]$ for some $0=x_0<x_2< \ldots <x_n=1$.
For any copula $C$ let $C^S$ denote the shuffle of $C$ by means of $S$ in $x$ direction, i.e., we shuffle the vertical strips $J_i \times \II$ with permutation $\pi$ into a new order $J_{\pi(1)},J_{\pi(2)},\ldots,J_{\pi(n)}$.
Now suppose $Q(u,v)=\sup_{j \in J} C_j(u,v)$ for all $u,v \in \II^2$, where $J$ is some index set and $C_j$ are copulas. Then $\sup_{j \in J} C_j^S(u,v)$ is another quasi-copula which could be declared to be an $S$-shuffle of $Q$.
Unfortunately, this kind of shuffles are not well defined since different representations of $Q$ produce different results as demonstrated in the next example.}

\ns{
\begin{example}\label{ex:sh1}
Let $S$ be a shuffling mechanism given by the partition $J_1=(0,\frac13], J_2=(\frac13,\frac23], J_3=(\frac23,1]$ and the cyclic permutation $\pi=(1,2,3)$.
Let $C_1, C_2, C_3, C_4$ be singular copulas with all of their mass distributed on the graphs of functions
\begin{align*}
f_1(u) &=
\begin{cases}
u+\frac23, & 0\leqslant u \leqslant \frac13,\\
u-\frac13, & \frac13 <u \leqslant 1,
\end{cases}
&
f_2(u) &=
\begin{cases}
u+\frac13, & 0\leqslant u \leqslant \frac23,\\
u-\frac23, & \frac23 <u \leqslant 1,
\end{cases}\\
f_3(u) &=
\begin{cases}
1-u, & 0\leqslant u \leqslant \frac13,\\
u-\frac13, & \frac13 <u \leqslant 1,
\end{cases}
&
f_4(u) &=
\begin{cases}
u+\frac13, & 0\leqslant u \leqslant \frac23,\\
1-u, & \frac23 <u \leqslant 1.
\end{cases}
\end{align*}
The mass distribution of copulas $C_1, C_2, C_3, C_4$ and their shuffles $C_1^S, C_2^S, C_3^S, C_4^S$ is shown in Figure~\ref{fig:sh1}. Here, the point-wise supremum operation is denoted by $\vee$ for simplicity.
Note that $C_1 \vee C_2=C_3 \vee C_4$ (denoted by $Q$ in Figure~\ref{fig:sh1}), however, $C_1^S \vee C_2^S=M$ while $C_3^S \vee C_4^S \neq M$.
\begin{figure}[!ht]
\centering
\includegraphics[width=\textwidth]{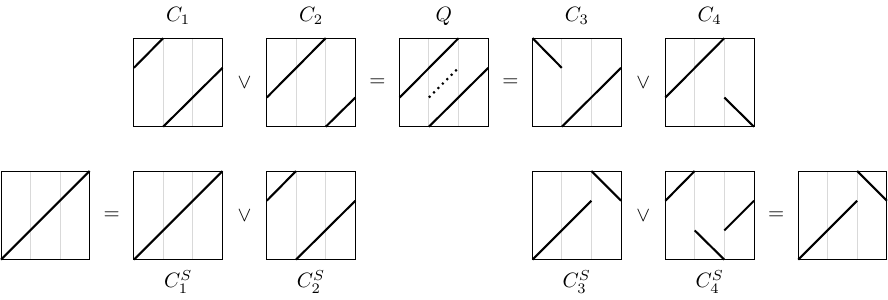}
\caption{Mass distributions of copulas $C_1,C_2,C_3,C_4$ and their shuffles from Example~\ref{ex:sh1}, along with the mass distributions of some of their supremums. The full lines indicate positive mass while the dotted line indicates negative mass.}
\label{fig:sh1}
\end{figure}
\end{example}
}

\ns{
The above example suggests that in order to get a unique result, we should fix a representation of quasi-copula $Q$.
A natural choice would be to represent $Q$ as supremum of all copulas that are below $Q$ in the point-wise order (as explained in Section~\ref{sec:mot} this is always possible in the bivariate setting).
However, even this does not work because the copula $M$ would be represented by the set of all copulas, and after shuffling with a shuffling mechanism $S$, the supremum would still be $M$, i.e., it would not coincide with the usual shuffle $M^S$.
For these reasons, the idea of indirect shuffling of quasi-copulas seem to be difficult to implement, unless a much more clever method is found.
Therefore, in this paper, we focus on direct shuffling of quasi-copulas instead.
}

It is well known (see \cite{Nels}) that any $d$-variate copula $C$ induces a $\sigma$-additive probability measure $\mu_C$ on the Borel $\sigma$-algebra on $\II^d$. On the other hand, there exist quasi-copulas that do not induce a signed measure on the same $\sigma$-algebra (see \cite{NeQuMoRoLaUbFl3,FeSaRoLaUbFl}).
 Nevertheless, any $d$-variate quasi-copula $Q$ induces a finitely-additive signed measure $\mu_Q$ on the algebra of finite disjoint unions of half-open boxes in $\II^d$, i.e., boxes of the from $R=(x_1,y_1]\times(x_2,y_2]\times\ldots\times(x_d,y_d]$, where $0 \leqslant x_i<y_i\leqslant 1$ for all $i \in [d]$.
    In fact, $\mu_Q$ is just an extension of the volume function $V_Q$ and is uniquely determined by the property $\mu_Q(R)=V_Q(R)$ for all half-open boxes $R \subseteq \II^d$. 
    Conversely, a finitely-additive signed measure $\mu$ on the algebra of finite disjoint unions of half-open boxes in $\II^d$ induces a grounded function $Q_\mu$ defined by
\begin{equation}\label{eq:gf}
Q_\mu(x_1,x_2,\ldots,x_d)=\mu\big((0,x_1]\times(0,x_2]\times\ldots\times(0,x_d]\big)
\end{equation}
for all $(x_1,x_2,\ldots,x_d) \in \II^d$.
To characterize when $Q_\mu$ is a quasi-copula we recall the definition of \emph{primal sets} from \cite{OmSt4} and adjust it to define \emph{half-open primal sets}. A \emph{half-open primal set} is any set of the form
\begin{align*}
R &=(0,y_1] \times\ldots\times(0,y_{k-1}]\times (x_k,y_k]\times (0,y_{k+1}]\times\ldots\times(0,y_d] \quad \textup{or}\\
R_c &=\big((0,1] \times\ldots\times(0,1]\times (x_k,y_k]\times (0,1]\times\ldots\times(0,1]\big)\backslash R
\end{align*}
where $k \in [d]$ and $0\leqslant x_k\leqslant y_k \leqslant 1$.
Observe that any half-open primal set is a finite union of half-open boxes. Hence, from \cite[Theorem~2]{OmSt4} we immediately obtain the following
\begin{proposition}
Function $Q_\mu$ induced by a finitely additive measure $\mu$ through Equation~\eqref{eq:gf} is a quasi-copula if and only if
\begin{enumerate}[$(i)$]
\item $\mu$ is stochastic, i.e., $\mu\big((0,1]\times\ldots\times(0,1]\times(x_i,y_i]\times(0,1]\times\ldots\times(0,1]\big)=y_i-x_i$ for all $i \in [d]$ and all $0\leqslant x_i<y_i\leqslant 1$, and
\item $\mu(T)$ is nonnegative for any half-open primal set $T$.
\end{enumerate}
\end{proposition}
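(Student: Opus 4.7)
The strategy is to reduce the statement directly to the characterization of quasi-copulas via primal sets given in \cite[Theorem~2]{OmSt4}, by translating each of the defining conditions of a quasi-copula (correct univariate margins, and nonnegativity of $V_Q$ on every primal set) into a condition on the measure $\mu$ via formula~\eqref{eq:gf}.

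First, I would record the trivial observation that $Q_\mu$ is automatically grounded: plugging $x_i=0$ into \eqref{eq:gf} makes one of the factors $(0,0]=\emptyset$, so $Q_\mu$ vanishes whenever any coordinate is $0$. Next, I would unpack the uniform-margin condition. Setting $x_j=y_j$ for $j\neq i$ equal to $1$ in \eqref{eq:gf} gives
\[
Q_\mu(1,\ldots,1,u_i,1,\ldots,1)=\mu\bigl((0,1]\times\cdots\times(0,u_i]\times\cdots\times(0,1]\bigr),
\]
so the margin condition is equivalent to the special case $x_i=0$ of (i). By finite additivity of $\mu$, the general case $0\leqslant x_i<y_i\leqslant 1$ in (i) is then equivalent to this special case, because
\[
\mu\bigl((0,1]\!\times\!\cdots\!\times\!(x_i,y_i]\!\times\!\cdots\!\times\!(0,1]\bigr)=\mu\bigl((0,1]\!\times\!\cdots\!\times\!(0,y_i]\!\times\!\cdots\!\times\!(0,1]\bigr)-\mu\bigl((0,1]\!\times\!\cdots\!\times\!(0,x_i]\!\times\!\cdots\!\times\!(0,1]\bigr).
\]
This shows that condition (i) of the proposition matches the margin condition of a quasi-copula.

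The remaining content is the equivalence, under (i), of nonnegativity of $\mu$ on half-open primal sets with the primal-set nonnegativity characterization of \cite[Theorem~2]{OmSt4}. For this I would verify the identity $\mu(T)=V_{Q_\mu}(R)$, where $R$ is the closed primal set whose half-open version is $T$. Since $\mu$ extends $V_{Q_\mu}$ from half-open boxes by finite additivity, and since every half-open primal set (of either form in the definition) is a finite disjoint union of half-open boxes, this identity follows from a finite inclusion--exclusion; it is precisely the signed sum of values of $Q_\mu$ at the vertices of $R$ that computes $V_{Q_\mu}(R)$. With this identification, condition (ii) of the proposition is literally the statement that $V_{Q_\mu}$ is nonnegative on every primal set. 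Invoking \cite[Theorem~2]{OmSt4} in both directions then closes the argument.

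I do not expect a substantive obstacle here; the only slightly delicate point is the bookkeeping that identifies the signed $Q_\mu$-volume of a closed primal set with $\mu$ of its half-open counterpart, but this is a direct consequence of the way $\mu$ is defined to extend $V_{Q_\mu}$. Beyond that, the proof is essentially a dictionary between the function side (used in \cite{OmSt4}) and the measure side (used here).
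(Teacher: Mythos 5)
Your proposal is correct and follows essentially the same route as the paper, which proves the proposition simply by observing that every half-open primal set is a finite union of half-open boxes and then invoking \cite[Theorem~2]{OmSt4}. Your additional bookkeeping (groundedness, the margins-versus-stochasticity dictionary, and the identity $\mu(T)=V_{Q_\mu}(R)$) just makes explicit what the paper leaves as immediate.
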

We will now generalize the definition of a shuffle to the setting of quasi-copulas. Informally, a shuffle of a quasi-copula $Q$ can be described similarly as a shuffle of min (cf. Section~2 \ns{and the definition given in \cite{MiTa}}). However, for the formal definition \ns{we will use the interpretation with the interval exchange transformations (see \cite{DuFeSaTr}) and the push-forward of the} induced finitely-additive measure $\mu_Q$.

\begin{definition}
Let $Q$ be a $d$-variate quasi-copula and $\mu_Q$ its induced finitely-additive measure.
For each $k \in [d]$ let $\{J_i^k=(a_{i-1}^k,a_i^k]\}_{i=1}^{n_k}$ be a partition of $(0,1]$, where $0=a_0^k\leqslant a_1^k\leqslant \ldots \leqslant a_{n_k}^k=1$, and let $\pi_k$ be a permutation in $S_{n_k}$.
Furthermore, let $\phi \colon \II^d \to \II^d$ be a map given by $\phi(t_1,t_2,\ldots,t_d)=(\phi_1(t_1),\phi_2(t_2),\ldots,\phi_d(t_d))$, where $\phi_k \colon \II \to \II$ is a bijective piecewise linear measure preserving transformation defined by $\phi_k(0)=0$ and
$$\phi_k(t)=t-a_{i-1}^k+\sum_{\{j |\pi_k(j)<\pi_k(i)\}}\lambda(J_j^k)$$
for all $t \in J_i^k$ and $i \in [n_k]$, and $\lambda$ denotes the Lebesgue measure. If $\widehat{\mu}$ is the finitely-additive measure given by $\widehat{\mu}(R)=\mu_Q(\phi^{-1}(R))$ for all half-open boxes $R$, then the grounded function $Q_{\widehat{\mu}}$ induced by $\widehat{\mu}$ is called a \emph{shuffle} of $Q$.
\end{definition}

We remark that the finitely-additive measure $\widehat{\mu}$ above is well defined since the transformations $\phi_k$, $k \in [d]$, are piecewise linear, so that for a half-open box $R$ the set $\phi^{-1}(R)$ is a finite union of half-open boxes.
Note that our definition of a shuffle (of a quasi-copula) differs from the definition of a shuffle (of a copula) given in \cite{MiTa} by the fact that we shuffle also in the first coordinate while in \cite{MiTa} the first coordinate is not shuffled ($\phi_1(t)=t$ there).
We do this for convenience because we will encounter shuffles that use the same permutation for shuffling in all coordinates.
It should also be noted that a shuffle of a proper quasi-copula need not be a quasi-copula since the negative mass may be shuffled to any position.

Clearly, Theorem~\ref{thm:miku} cannot be directly generalized to quasi-copulas since the set of copulas is closed in the supremum norm.
However, the result of Theorem~\ref{thm:miku} can be interpreted in another way.
Suppose we are given a pair $(F,M)$, where $F$ is an arbitrary function and $M$ the upper Fr\'{e}chet-Hoeffding bound for copulas. Clearly, if we want to obtain the first factor of this pair by shuffling the second one, we must insist that $F$ is a shuffle of min.
However, if we allow approximation of the first factor, then we can let $F$ be an arbitrary copula. Indeed, function $F$ is a copula if and only if it can be approximated arbitrarily well in the supremum norm by a shuffle of $M$. Bearing in mind that the set of all copulas is closed, this is just a reformulation of Theorem~\ref{thm:miku}.
In order to be able to generalize Theorem~\ref{thm:miku} to quasi-copulas we thus have to allow approximation of the second factor as well.
So the question we pose is whether any quasi-copula can be approximated by a shuffle of some approximation of $M$. As we shall prove, the answer is positive for any $d \geq 2$.
Here we give a proof for the bivariate case only, since we will later prove a more general result valid in any dimension.
Nevertheless, we feel that it is worth writing down the proof for this special case first since the procedure we use is easier to visualize in this setting.

\begin{theorem}\label{thm:bishuffle}
Let $Q$ be a bivariate quasi-copula and $\varepsilon>0$. Then there exist bivariate quasi-copulas $A$ and $B$ such that $\|Q-A\|_\infty <\varepsilon$, $\|M-B\|_\infty<\varepsilon$, and $A$ is a shuffle of $B$.
\end{theorem}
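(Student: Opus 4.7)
The plan is to reduce the theorem to an explicit construction on a fine grid, following a discretize-then-shuffle strategy.

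\textbf{Step 1 (coarse discretization).} Choose $n$ with $2/n < \varepsilon$ and let $A'$ be the bilinear interpolation of $Q$ on the grid $\{i/n\}_{0 \le i \le n}^2$. Bilinear interpolation of a $1$-Lipschitz function is again $1$-Lipschitz in each variable and preserves the uniform marginals, so $A'$ is itself a quasi-copula; moreover $\|Q - A'\|_\infty \le 2/n < \varepsilon$. The cell masses $v_{ij} := V_{A'}(R_{ij}) = V_Q(R_{ij})$ lie in $[-1/n, 1/n]$ and satisfy $\sum_j v_{ij} = \sum_i v_{ij} = 1/n$.

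\textbf{Step 2 (refined near-diagonal $A$).} Fix a large integer $m$, set $N = nm$, and within each coarse cell $R_{ij}$ redistribute $v_{ij}$ as $m$ equal pieces of mass $v_{ij}/m$ placed uniformly on the diagonals of the $m$ fine sub-cells along the diagonal of $R_{ij}$. The resulting grounded function $A$ has uniform marginals (each fine $x$-strip meets one diagonal piece per coarse column, with masses summing to $1/N$) and is $1$-Lipschitz: the cumulative inequalities $0 \le \sum_{l \le j} v_{il} \le 1/n$ coming from the quasi-copula axioms on $A'$ give the necessary partial-sum bounds. Hence $A$ is a quasi-copula; since it agrees with $A'$ on the coarse grid and shares its cell masses, $\|A - A'\|_\infty \le 1/n$ and so $\|A - Q\|_\infty < \varepsilon$.

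\textbf{Step 3 (shuffle to $B$).} I would choose permutations $\pi_1, \pi_2$ of $[N]$ so that the associated shuffle $\phi$ rearranges the $n^2 m$ nonzero fine cells of $A$ (indexed by $(i,j,k) \in [n]^2 \times [m]$, with mass $v_{ij}/m$ at fine position $((i-1)m+k,(j-1)m+k)$) into a thin ribbon about the main diagonal of $\II^2$, and so that the unshuffled $B := \phi^{-1}(A)$ has cumulative diagonal mass matching that of $M$ up to $O(1/n)$. A row-scan ordering, reading coarse cell $(i,j)$ before $(i,j')$ for $j < j'$ and coarse row $i$ before $i' > i$, has the property that after each full coarse row the cumulative $B$-mass hits exactly the next $M$-cumulative value, while the partial sums within a row stay in $[0, 1/n]$ by the $1$-Lipschitz axiom on $A'$.

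\emph{Principal obstacle.} The genuine difficulty is twofold. First, since there are $n^2 m$ nonzero cells in $A$ but only $N = nm$ diagonal slots in $B$, most of $B$'s nonzero cells must be placed off the diagonal; one then needs to bound the contribution of this off-diagonal mass to $\|B - M\|_\infty$. Second, the signed values $v_{ij}/m$ make the $1$-Lipschitz condition on $B$ equivalent to boundedness of every partial signed sum, a nontrivial combinatorial constraint. I expect to resolve this by taking $m$ proportional to $n$ (so $N = n^2$) and arranging the off-diagonal cells in a band of fine-index width $O(n)$, whose total CDF contribution is $O(1/n) < \varepsilon$; the $1$-Lipschitz property of $B$ then follows by transplanting the quasi-copula axioms of $A'$ through $\pi_1, \pi_2$. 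Pinning down the exact permutation and the resulting bounds is the main technical content of the argument.
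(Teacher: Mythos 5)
Your Steps 1--2 are sound and essentially reproduce the paper's function $A$: the paper also works with a coarse mesh of $m$ intervals refined into $m^2$ subintervals, and its $A$ likewise concentrates the signed coarse-cell mass $v_{ij}$ on the $m$ diagonal fine cells of each coarse cell, $v_{ij}/m$ apiece; your verification of the quasi-copula property of $A$ via the partial-sum bounds $0\leqslant\sum_{i'\leqslant i}v_{i'j}\leqslant 1/n$ is correct. The gap is Step 3, which you yourself flag as unresolved, and it is precisely where the content of the theorem lies. The missing idea is that $B$ should not be a ``thin ribbon about the diagonal'' but the \emph{$m$-th ordinal multiple of $Q$}: a block-diagonal arrangement in which each of the $m$ diagonal big squares $K_l\times K_l$ carries a rescaled copy of $Q$ (so each fully populated $m\times m$ block of fine cells holds the entire signed mass matrix $(v_{ij}/m)_{i,j}$). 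Such a $B$ is automatically a quasi-copula (ordinal sums of quasi-copulas are quasi-copulas) and satisfies $\|B-M\|_\infty\leqslant 4/m$ because $B$ and $M$ assign the same mass $1/m$ to every diagonal big square. The shuffle connecting $B$ to $A$ is then completely explicit: the index transposition $\pi_n(m(j-1)+k)=m(k-1)+j$ applied to both coordinates, which sends the fine cell at relative position $(r,s)$ inside the $l$-th diagonal block of $B$ to the $l$-th diagonal fine cell of the big square at global position $(r,s)$ of $A$.

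Your ribbon/row-scan proposal, as described, does not deliver this. A shuffle acts as a product permutation on the two coordinates separately, so you cannot freely thread the $n^2m$ signed cells one after another along the diagonal; and a near-diagonal placement of signed masses controls at best the diagonal cumulative distribution, not the uniform marginals of $B$ nor the nonnegativity of the volume of every rectangle touching the boundary of $\II^2$ (the condition that actually certifies $B$ is a quasi-copula). Your instinct that $m$ should be proportional to $n$ is right (the paper takes them equal, $n=m^2$ fine intervals for $m$ coarse ones), and your observation that the $n^2m$ nonzero cells of $A$ cannot all sit on the diagonal of $B$ is exactly the sign that $B$ must occupy full diagonal \emph{blocks} rather than a diagonal band. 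Without identifying that block structure and the explicit transposition permutation, the proof of the two hard claims --- that $B$ is a quasi-copula and that $\|B-M\|_\infty<\varepsilon$ --- is not in place.
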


\begin{proof}
  Fix a positive integer $m$ for now and let $n=m^2$.
Define $\{J_i\}_{i=1}^n$ as the regular partition of $\II$ into $n$ subintervals of equal length $\frac{1}{n}$ and $\{K_i\}_{i=1}^m$ as the regular partition of $\II$ into $m$ subintervals of equal length $\frac{1}{m}$, so that
$$K_i=\bigcup_{j=1}^m J_{m(i-1)+j}$$
for all $i \in [m]$.
In order to make the rest of the proof clearer we will call subsquares $J_i \times J_j$ of $\II^2$ the \emph{small squares} and squares $K_i \times K_j$ of $\II^2$ the \emph{big squares}.

Given a quasi-copula $Q$ we let $B$ be the ``$m$-th ordinal multiple of $Q$'' (see \cite{MeSe}), i.e., $$B(u,v)=a_i+(b_i-a_i) Q\left(\dfrac{u-a_i}{b_i-a_i}, \dfrac{v-a_i}{b_i-a_i} \right)$$ for $u,v\in K_i$, $i\in[m]$, and $B(u,v)=M(u,v)$ otherwise. Here, we have denoted $K_i=[a_i,b_i]$, so that $b_i-a_i=\frac{1}{m}$ for all $i\in[m]$.
Next, we slightly adjust the approach of \cite[Section~3.2.3]{Nels}. We introduce a (two-way) shuffle corresponding to the partition $\{J_i\}_{i=1}^n$ induced by the permutation $\pi_n$ on $n$ indices defined by $\pi_n(m(j-1)+k)=m(k-1)+j$ for all $j,k \in [m]$. We shuffle both rows and columns of $B$ using the same permutation $\pi_n$.

Let us compare the values of quasi-copula $Q$ and function $A$ obtained as the shuffle of $B$ described above. Denote by $\mu_{rs}$ the volume corresponding to $Q$ of the big square with northeast corner equal to $\left(\frac{r}{m}, \frac{s}{m}\right)$ for $r,s\in[m]$, i.e. $\mu_{rs}=V_Q(K_r \times K_s)$. Then
\begin{equation*}
    Q\left(\dfrac{r}{m}, \dfrac{s}{m}\right)=\sum_{j=1}^{r}\sum_{k=1}^{s}\mu_{jk}.
\end{equation*}
Now, all the nonzero mass of $B$ is concentrated in the $m$ big squares that lie along the main diagonal. Since each of these squares contains a normalized $Q$, the $B$-volume of each of them is $\frac{1}{m}$. We observe in passing that copula $M$ has all its mass concentrated on the diagonal so that the $M$-volume of every big square along the main diagonal is also $\frac{1}{m}$.
Each of the diagonal big squares, say the $l$-th one for $l\in[m]$, is a scaled image of $Q$. So, the volume of the small square at the $(r,s)$-th relative position within the $l$-th square equals $\dfrac{\mu_{rs}}{m}$ for every $r,s\in[m]$.
The global position of this small square is $(m(l-1)+r,m(l-1)+s)$ and is sent by the two-way shuffle onto the small square positioned at $(m(r-1)+l, m(s-1)+l)$. Therefore, when $l$ runs, the image of the small square moves along the small squares that lie on the diagonal of the big square at the $(r,s)$-th global position within $A$. Note that the small squares that do not lie on the diagonal of the big squares within $A$ all have the $A$-volume equal to $0$.
We conclude that the $A$-volume of the big square at position $(r,s)$ equals $\mu_{rs}$.
Furthermore, function $A$ is a quasi-copula, because when shuffling $B$ in each direction, the slices of the big box $K_i \times K_i$ containing a copy of $Q$ do not actually change relative position but only get spread out and only zero mass is inserted in between. Hence, after the shuffle any rectangle touching the boundary of $\II^2$ still has nonnegative volume.
Figure~\ref{fig:bishuffle} shows the mass distribution of an example quasi-copula $Q$ (left) and corresponding quasi-copulas $B$ (middle) and $A$ (right) for $m=4$.
  
\begin{figure}[!ht]
\includegraphics{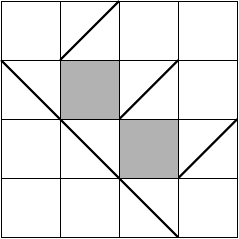}
\includegraphics{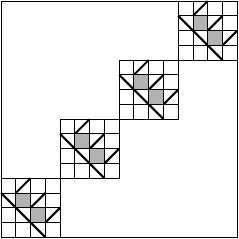}
\includegraphics{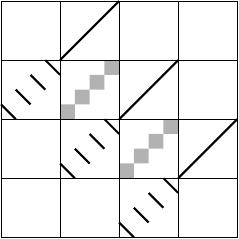}
\caption{Mass distribution of an example quasi-copulas $Q$ (left) and corresponding quasi-copulas $B$ (middle) and $A$ (right) from the proof of Theorem~\ref{thm:bishuffle} for $m=4$. On each big square quasi-copula $Q$ spreads either mass $\frac{1}{4}$ uniformly on one of the diagonals, or mass $-\frac{1}{4}$ uniformly on the square, or mass $0$.}
\label{fig:bishuffle}
\end{figure}

Using the analog of the displayed equation above for quasi-copula $A$ we deduce that $A\left(\frac{r}{m}, \frac{s}{m}\right)= Q\left(\frac{r}{m}, \frac{s}{m}\right)$ for all $r,s\in[m]$. This fact is clearly valid also for $r$ or $s$ equal to zero. Finally, choose arbitrary  $u,v\in\II$ and find $r,s\in[m]\cup\{0\}$ such that
  \[
    \left|u-\dfrac{r}{m}\right|<\dfrac{1}{m}\quad\mbox{and}\quad \left|v-\dfrac{s}{m}\right|<\dfrac{1}{m}.
  \]
  Then
  \[
    |A(u,v)-Q(u,v)|\leqslant\left|A(u,v)-A\left(\dfrac{r}{m}, \dfrac{s}{m}\right)\right| + \left|Q\left(\dfrac{r}{m}, \dfrac{s}{m}\right)-Q(u,v)\right| \leqslant \dfrac{2}{m}+ \dfrac{2}{m}
  \]
  by the fact that both quasi-copulas $A$ and $Q$ have the 1-Lipschitz property in each direction.
It remains to make an analog conclusion for $B$ and $M$. Indeed, recall the observation that $B$-volumes and $M$-volumes of big squares along the main diagonal are equal, implying that $B$-volumes and $M$-volumes of all big squares are equal,
inferring first that $B\left(\frac{r}{m}, \frac{s}{m}\right)= M\left(\frac{r}{m}, \frac{s}{m}\right)$ for all $r,s\in[m]\cup\{0\}$ and then, similarly as above,
$$|B(u,v)-M(u,v)| \leqslant \dfrac{2}{m}+ \dfrac{2}{m}.$$
The desired claim now follows if we choose the integer $m$ to be greater than $\frac{4}{\varepsilon}$.
\end{proof}

\begin{remark}
  It should be remarked that there are two major differences between copulas and proper quasi-copulas when shuffles are involved.
Firstly, a shuffle of a copula is always a copula, while a shuffle of a proper quasi-copula need not be a quasi-copula since a shuffle can easily bring negative volume to the boundary of $\II^d$.
So the situation described by the above theorem is, in fact, a very special situation where a shuffle of a quasi-copula is again a quasi-copula.
Secondly, since copulas induce positive measures on the Borel $\sigma$-algebra in $\II^d$ one can define more general shuffles of copulas than those considered above by using general measure preserving transformation, see for example \cite{DuFeSa}.
A quasi-copula $Q$, on the other hand, need not induce a signed measure on the Borel $\sigma$-algebra, but only induces a finitely-additive signed measure on the algebra of finite disjoint unions of half-open boxes. This means that for general quasi-copulas only basic shuffles considered above seem to be available.
\end{remark}

We raise the following open question. Does there exist a quasi-copula $Q$ with the property that the only shuffle of $Q$, which is again a quasi-copula, is $Q$ itself? Even more strict, does there exist a quasi-copula $Q$, which cannot be shuffled at all, i.e., the only shuffle of $Q$, which is again a quasi-copula, is a shuffle with identity permutations.

\section{Multivariate approximation}\label{sec:gen}

We will now generalize the result of Theorem~\ref{thm:bishuffle} by considering the general multivariate setting and by replacing the copula $M$ with an arbitrary quasi-copula. We first develop the necessary notation.

For a fixed positive integer $m$ let $n=m^2$, and define $\{J_i\}_{i=1}^n$ as the regular partition of $\II$ into $n$ subintervals of equal length $\frac{1}{n}$, and $\{K_i\}_{i=1}^m$ as the regular partition of $\II$ into $m$ subintervals of equal length $\frac{1}{m}$, so that
$$K_i=\bigcup_{j=1}^m J_{m(i-1)+j}$$
for all $i \in [m]$. These partitions create a division of the unit box $\II^d$ into $m^d$ boxes of width $\frac{1}{m}$ called the \emph{big boxes}, and each of those further into $m^d$ boxes of width $\frac{1}{n}$ called the \emph{small boxes}.

Let $\mathbf{r}=(r_1,r_2,\ldots,r_d)$ be a $d$-tuple of indices $r_i\in[m]$ for $i\in[d]$ and denote by ${\boldsymbol\xi}_{\mathbf{r}}$ the big box with the corner of maximal values equal to $\left(\dfrac{r_1}{m},\dfrac{r_2}{m},\ldots, \dfrac{r_d}{m}\right)$. It makes sense to say that $d$-tuple $\mathbf{r}$ \emph{determines the position} of this box. When $\mathbf{r}$ runs through all $d$-tuples in $[m]^d$, ${\boldsymbol\xi}_{\mathbf{r}}$ runs through all possible big boxes in $\II^d$.
Now, each small box within a fixed big box ${\boldsymbol\xi}_{\mathbf{r}}$ is determined in a similar way by a $d$-tuple $\mathbf{s}=(s_1,s_2,\ldots,s_d)$ of indices $s_i\in[m]$ for $i\in[d]$ giving the relative position of the small box within the big one.
We will denote the small box with relative position $\mathbf{s}$ within the big box ${\boldsymbol\xi}_{\mathbf{r}}$ by ${\boldsymbol\xi}_{\mathbf{r}}^{\mathbf{s}}$. Clearly, the global position of this small box within $\II^d$ is determined by its corner of maximal values which is equal to
  \[
    \left(\dfrac{r_1-1}{m}+\dfrac{s_1}{n},\dfrac{r_2-1}{m}+\dfrac{s_2}{n},\ldots, \dfrac{r_d-1}{m}+\dfrac{s_d}{n}\right).
  \]
  Recall also the permutation $\pi_n$ on $n$ indices defined in the proof of Theorem~\ref{thm:bishuffle} by $\pi_n(m(j-1)+k)=m(k-1)+j$. We will again use this permutation to shuffle the elements of partition $\{J_i\}_{i=1}^n$ in all $d$ variates simultaneously.
  If we do so, we may interpret $\pi_n$ as a bijective mapping $\pi_n \colon \II^d \to \II^d$ that shuffles the small boxes in $\II^d$ but otherwise preserves the shuffled pieces, i.e., it translates each small box. In this case
\begin{equation}\label{eq:pi}
\pi_n({\boldsymbol\xi}_{\mathbf{r}}^{\mathbf{s}})= {\boldsymbol\xi}_{\mathbf{s}}^{\mathbf{r}} \quad \textup{for all }  \mathbf{r},\mathbf{s} \in [m]^d.
\end{equation}
If $Q$ is a quasi-copula and $\mu_Q$ its finitely-additive measure, then the finitely-additive measure given by $\mu_{\pi_n(Q)}(R)=\mu_Q(\pi_n^{-1}(R))$ for all half-open boxes $R$ is well defined since $\pi_n^{-1}(R)$ is a finite union of half-open boxes. We will denote the grounded function it induces by $\pi_n(Q)$. \MO{(Observe that this is a slight change of the notation from Section \ref{sec:shuff} due to simplification in further computations. This should cause no confusion since the partition of the shuffles is fixed here.)}

Now choose $A,B\in\mathcal{Q}$ and let $\mu_A$ and $\mu_B$ denote their respective finitely-additive measures.
For convenience we will henceforth assume that the boxes ${\boldsymbol\xi}_{\mathbf{r}}^{\mathbf{s}}$ are half-open.
Let us introduce
a \emph{tensor product} of $A$ and $B$, depending on integer $m$, as a grounded function $A \otimes_m B$ induced by the finitely-additive measure $\mu_{A\otimes_m B}$ determined by
\begin{equation}\label{eq:tensor1}
\mu_{A\otimes_m B}(R) =\sum_{\mathbf{r},\mathbf{s} \in [m]^d}{\mu_A\left({{\boldsymbol\xi}_{\mathbf{r}}}\right) \mu_B\left( {{\boldsymbol\xi}_\mathbf{s}}\right)}  \frac{\mu_\Pi(R \cap {{\boldsymbol\xi}_{\mathbf{r}}^\mathbf{s}})}{\mu_\Pi({{\boldsymbol\xi}_{\mathbf{r}}^\mathbf{s}})},
\end{equation}
where $\Pi$ denotes the product copula and $R$ is an arbitrary half-open box in $\II^d$. Hence, for all $\mathbf{r},\mathbf{s} \in [m]^d$ the tensor product $A \otimes_m B$ spreads mass $\mu_A\left({{\boldsymbol\xi}_{\mathbf{r}}}\right) \mu_B\left( {{\boldsymbol\xi}_\mathbf{s}}\right)$ uniformly on ${{\boldsymbol\xi}_{\mathbf{r}}^\mathbf{s}}$. In particular,
\begin{equation}\label{eq:tensor2}
\mu_{A\otimes_m B}\left({{\boldsymbol\xi}_{\mathbf{r}}^\mathbf{s}}\right) ={\mu_A\left({{\boldsymbol\xi}_{\mathbf{r}}}\right) \mu_B\left( {{\boldsymbol\xi}_\mathbf{s}}\right)}  \quad\mbox{for all } \mathbf{r},\mathbf{s}\in[m]^d.
\end{equation}
    
We remark that when $d=2$ this construction is related to the Kronecker product (or tensor product) of matrices in the following way.
Let $\Delta_m$ be the mesh of all vertices of the big boxes ${\boldsymbol\xi}_{\mathbf{r}}$ and let $A_m$ and $B_m$ be discrete quasi-copulas obtained by restricting $A$ and $B$ to $\Delta_m$.
Furthermore, let $M_{A_m}$ and $M_{B_m}$ denote the mass matrices of the discrete quasi-copulas $A_m$ and $B_m$, i.e., the entries of $M_{A_m}$ are the $A$-volumes of the big boxes ${\boldsymbol\xi}_{\mathbf{r}}$, $\mathbf{r} \in [m]^2$.
Now let $\Delta_n$ be the mesh of all vertices of the small boxes ${{\boldsymbol\xi}_{\mathbf{r}}^\mathbf{s}}$, $\mathbf{r},\mathbf{s} \in [m]^2$, and let $T$ be a grounded discrete function defined on $\Delta_n$ whose mass matrix is the Kronecker product $M_{A_m} \otimes M_{B_m}$.
Then $T$ is the restriction of $A\otimes_m B$ to $\Delta_n$ and $A\otimes_m B$ is a piecewise bilinear extension of $T$ to $\II^2$.
So in terms of discrete quasi-copulas, the tensor product is given by the Kronecker product of the corresponding mass matrices.

Constructions similar to the tensor product defined above already exist in the literature.
In particular, in \cite{FeSaRoLaUbFl} the authors consider transformations of bivariate quasi-copulas induced by the so called \emph{quasi-transformation matrices}.
An interested reader will notice that a \emph{quasi-transformation matrix} is just the mass matrix of a bivariate discrete quasi-copula defined on a finite (not necessarily equidistant) mesh of points in $\II^2$.
Given a quasi-copula $Q$ and a quasi-transformation matrix $M=(m_{ij})$ (where $i$ increases left to right and $j$ increases bottom to top), whose associated mesh splits $\II^2$ into rectangles $R_{ij}$, the authors introduce a quasi-copula $M(Q)$ that, for each $i$ and $j$, spreads mass $m_{ij}$ in rectangle $R_{ij}$ in the same but rescaled way as $Q$ spreads mass in $\II^2$.
Using the notation from the previous paragraph and the transformation just described, the tensor product of bivariate quasi-copulas $A$ and $B$ can be expressed as $A\otimes_m B=M_{A_m}(M_{B_m}(\Pi))$, where $\Pi$ is the product copula.
Note that the reason why copula $\Pi$ appears in this formula is that in the definition of the tensor product we chose to spread mass uniformly on each small box ${{\boldsymbol\xi}_{\mathbf{r}}^\mathbf{s}}$.
We could easily have chosen to spread mass using some other copula or quasi-copula as the pattern. The crucial point here is that in all small boxes the mass is spread equally, i.e., using the same fixed copula.

The extension of the above construction $M(Q)$ to multivariate quasi-copulas was introduced only recently in \cite{FeSaQuMoUbFl}.
It has previously been considered for multivariate copulas $C$ in \cite{TrFeSa,FeSaTr}, where the matrix $M$ is replaced by a probability distribution $\tau$ on a discrete mesh of points (called a \emph{generalized transformation matrix}) and the resulting copula is denoted by $\mathcal{V}_\tau(C)$.
Here, we will formulate the extension in terms of discrete functions rather than quasi-transformation matrices since this approach seems to be applicable to all situations, bivariate and multivariate, for copulas and quasi-copulas.

\MO{In what follows we relax our definition of a $d$-variate discrete quasi-copula by allowing boxes in a mesh to be of different sizes.}
Let $A$ be a $d$-variate discrete quasi-copula defined on a mesh determined by half-open boxes $R_\mathbf{i}=(a_{i_1}^1,b_{i_1}^1] \times (a_{i_2}^2,b_{i_2}^2]\times\ldots\times (a_{i_d}^d,b_{i_d}^d] $ for $\mathbf{i}\in \mathcal{I}= [n_1]\times[n_2]\times\ldots\times [n_d]$,
and let $M_A=(m_\mathbf{i})_{\mathbf{i} \in \mathcal{I}}$ be its $d$-dimensional mass table, i.e., $m_\mathbf{i}=V_A(R_\mathbf{i})$ for all $\mathbf{i}\in \mathcal{I}$.
\MO{For any $\mathbf{i}\in \mathcal{I}$ and any $\mathbf{b}_\mathbf{i}=(b_{i_1}^1,b_{i_2}^2,\ldots,b_{i_d}^d)$ we have
$$A (\mathbf{b}_\mathbf{i})=\sum_{\mathbf{b}_\mathbf{j}\leqslant\mathbf{b}_\mathbf{i}} m_\mathbf{j},$$ 
where $\mathbf{b}_\mathbf{j}\leqslant\mathbf{b}_\mathbf{i}$ means componentwise order. If any of the components of $\mathbf{b}_\mathbf{i}$ is zero, the value of $A$ at this point is left undefined. By abuse of notation we will understand that this value is zero. It is not hard to see that the so defined function $A$, defined on the set $\{\mathbf{b}_\mathbf{i}\}_{\mathbf{i}\in \mathcal{I}}\subseteq\II^d$, satisfies the definition of (discrete) quasi-copula from the beginning of Section \ref{sec:mot}. 
}

For each $\mathbf{i} \in \mathcal{I}$ let $\phi_\mathbf{i} \colon R_\mathbf{i} \to (0,1]^d$ be a bijective linear rescaling map defined by
$$\phi_\mathbf{i}(t_1,t_2,\ldots,t_d)=\left(\frac{t_1-a_{i_1}^1}{b_{i_1}^1-a_{i_1}^1},\frac{t_2-a_{i_2}^2}{b_{i_2}^1-a_{i_2}^2},\ldots,\frac{t_d-a_{i_d}^d}{b_{i_d}^d-a_{i_d}^d}\right).$$
For a quasi-copula $Q$ let $M_A(Q)$ denote the grounded function induced by the finitely-additive measure $\mu_{M_A(Q)}$ determined by the property
$$\mu_{M_A(Q)}(R)=m_\mathbf{i}\cdot \mu_{Q}(\phi_\mathbf{i}(R))$$
whenever $R$ is a half-open subbox of $R_\mathbf{i}$. Loosely speaking, function $M_A(Q)$ spreads mass on each $R_\mathbf{i}$ in the same but rescaled way as $Q$ spreads mass on $\II^d$.
The following lemma gives the discussed extension and is an easily consequence of \cite[Theorem~10]{FeSaQuMoUbFl} and the discussions in \cite{DeBaDeMe} and \cite{KoBuKoOmSt}.

\begin{lemma}\label{lem:transformation}
For any discrete quasi-copula $A$ and any quasi-copula $Q$, the function $M_A(Q)$ is a quasi-copula.
\end{lemma}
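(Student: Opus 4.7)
The plan is to verify that $M_A(Q)$ is a quasi-copula by applying the characterization established in the proposition preceding the definition of shuffles: it suffices to show that the finitely-additive signed measure $\mu_{M_A(Q)}$ is stochastic and assigns nonnegative mass to every half-open primal set. Since the cells $\{R_\mathbf{i}\}_{\mathbf{i}\in\mathcal{I}}$ partition $(0,1]^d$, for any half-open box $R$ we have the decomposition
\[
\mu_{M_A(Q)}(R) = \sum_{\mathbf{i}\in\mathcal{I}} m_\mathbf{i}\, \mu_Q\bigl(\phi_\mathbf{i}(R\cap R_\mathbf{i})\bigr),
\]
which will serve as the workhorse for both verifications.

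For stochasticity, specialize $R$ to a slab in direction $k$; each nonempty $\phi_\mathbf{i}(R\cap R_\mathbf{i})$ is itself a slab in $(0,1]^d$ whose $\mu_Q$-mass equals its length by the stochasticity of $Q$. Grouping the resulting sum by the index $i_k$ and applying the stochasticity of the discrete quasi-copula $A$ on the mesh slab of index $i_k$ yields the required value $y_k-x_k$ after a short telescoping. The grounded property is immediate since any half-open box with a coordinate starting and ending at $0$ is empty.

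The main step is to show that $\mu_{M_A(Q)}(T)\geq 0$ for every half-open primal set $T$. For $T$ of the first form in direction $k$, each $\phi_\mathbf{i}(T\cap R_\mathbf{i})$ is again a primal set in $(0,1]^d$ of the same form, because all coordinates other than $k$ begin at $0$ in $T$ and thus still begin at $0$ after rescaling by $\phi_\mathbf{i}$. Hence $\mu_Q(\phi_\mathbf{i}(T\cap R_\mathbf{i}))\geq 0$ by the quasi-copula property of $Q$. However, since the masses $m_\mathbf{i}$ may be negative, a term-by-term bound is not available, and this is the principal obstacle. The required reorganization expresses $\mu_{M_A(Q)}(T)$ as a sum of products of nonnegative $A$-volumes of primal sets in direction $k$ and nonnegative $Q$-volumes of primal sets in the same direction, both factors being nonnegative by the quasi-copula properties of $A$ and $Q$ respectively. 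This regrouping is exactly the content of \cite[Theorem~10]{FeSaQuMoUbFl} in the multivariate copula setting, and the extension to discrete quasi-copulas is a routine adaptation based on the patchwork frameworks of \cite{DeBaDeMe} and \cite{KoBuKoOmSt}. The treatment of primal sets of the second (complement) form is analogous, using the $1$-Lipschitz properties of $A$ and $Q$ in direction $k$ in place of their monotonicity; together the two cases establish that $M_A(Q)$ is a quasi-copula.
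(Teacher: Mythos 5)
Your outline is correct and ultimately rests on the same external result the paper itself invokes: the paper offers no proof of Lemma~\ref{lem:transformation} beyond declaring it an easy consequence of \cite[Theorem~10]{FeSaQuMoUbFl} and the discussions in \cite{DeBaDeMe} and \cite{KoBuKoOmSt}, which is precisely where you place the decisive regrouping of $\mu_{M_A(Q)}(T)$ into products of nonnegative $A$- and $Q$-volumes of primal sets. The additional scaffolding you supply --- the reduction to the stochasticity-plus-primal-set characterization, the verification of stochasticity by summing over cells with fixed $i_k$, and the explicit identification of the possibly negative masses $m_{\mathbf{i}}$ as the obstacle --- is sound and in fact more informative than what the paper records.
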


Note that with this extension the tensor product formula
$$A\otimes_m B=M_{A_m}(M_{B_m}(\Pi))$$
remains valid in any dimension, so Lemma~\ref{lem:transformation} implies that $A \otimes_m B$ is a quasi-copula for any quasi-copulas $A$ and $B$.
The following proposition is the main reason for our introduction of a tensor product of quasi-copulas.

  \begin{proposition}\label{prop:tensor}
For any two quasi-copulas $A$ and $B$ the following properties hold.
  \begin{enumerate}[$(a)$]
  \item $A \otimes_m B$ and $B \otimes_m A$ are quasi-copulas.
    \item Shuffle \MO{determined by permutation} $\pi_n$ defined in \eqref{eq:pi} has the property $\pi_n(A\otimes_m B)=B\otimes_m A$, so that quasi-copulas $A\otimes_m B$ and $B\otimes_m A$ are shuffles of each other.
    \item Quasi-copula $A\otimes_m B$ 
    converges uniformly to $A$ 
    as $m$ tends to infinity.
  \end{enumerate}
  \end{proposition}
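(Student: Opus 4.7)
The plan is to dispatch (a) immediately via Lemma~\ref{lem:transformation} together with the identity $A\otimes_m B = M_{A_m}(M_{B_m}(\Pi))$ noted just before the proposition. Applying the lemma first with the quasi-copula $\Pi$ and the discrete quasi-copula $B_m$ shows that $M_{B_m}(\Pi)$ is a quasi-copula; applying it again with $A_m$ to this quasi-copula gives that $A\otimes_m B$ is a quasi-copula. Swapping the roles of $A$ and $B$ handles $B\otimes_m A$.

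For (b) I would compare the finitely-additive measures of the two sides on every half-open box. By \eqref{eq:pi} the map $\pi_n$ restricted to $\xi_{\mathbf{r}}^{\mathbf{s}}$ is a translation onto $\xi_{\mathbf{s}}^{\mathbf{r}}$, and all small boxes have equal $\Pi$-measure. Hence for any half-open box $R\subseteq \xi_{\mathbf{s}}^{\mathbf{r}}$, the preimage $\pi_n^{-1}(R)$ is a half-open box contained in $\xi_{\mathbf{r}}^{\mathbf{s}}$ with $\mu_\Pi(\pi_n^{-1}(R))=\mu_\Pi(R)$, and applying \eqref{eq:tensor1} to both tensor products yields
\[
\mu_{\pi_n(A\otimes_m B)}(R)=\mu_{A\otimes_m B}(\pi_n^{-1}(R))=\mu_A(\xi_{\mathbf{r}})\mu_B(\xi_{\mathbf{s}})\frac{\mu_\Pi(R)}{\mu_\Pi(\xi_{\mathbf{s}}^{\mathbf{r}})}=\mu_{B\otimes_m A}(R).
\]
An arbitrary half-open box in $\II^d$ is the finite disjoint union of its intersections with the small boxes, so finite additivity of both measures transfers the identity everywhere, and then \eqref{eq:gf} forces equality of the induced grounded functions.

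For (c) and (d) I would first verify agreement on the grid $\{(r_1/m,\ldots,r_d/m) : r_i\in\{0,1,\ldots,m\}\}$. Summing \eqref{eq:tensor2} over all big boxes dominated by a grid point $\mathbf{u}=(r_1/m,\ldots,r_d/m)$, and using $\sum_{\mathbf{s}\in[m]^d}\mu_B(\xi_{\mathbf{s}})=V_B(\II^d)=1$ (a direct consequence of conditions $(i)$ and $(iv)$ applied to the full cube), gives
\[
(A\otimes_m B)(\mathbf{u})=\sum_{\mathbf{r}'\leqslant \mathbf{r}}\mu_A(\xi_{\mathbf{r}'})\sum_{\mathbf{s}\in[m]^d}\mu_B(\xi_{\mathbf{s}})=A(\mathbf{u}),
\]
and symmetrically $(B\otimes_m A)(\mathbf{u})=B(\mathbf{u})$ on the grid. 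Since both tensor products are quasi-copulas by (a), the $1$-Lipschitz property in each coordinate gives $\|A\otimes_m B - A\|_\infty\leqslant 2d/m$ and $\|B\otimes_m A - B\|_\infty\leqslant 2d/m$ by approximating an arbitrary $\mathbf{u}\in\II^d$ by the nearest grid point (within $1/m$ per coordinate), exactly as in the final paragraph of the proof of Theorem~\ref{thm:bishuffle}. Both bounds tend to $0$ as $m\to\infty$.

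The main obstacle I expect is conceptual rather than computational: one must verify that the translation-by-$\pi_n$ identity in (b) genuinely swaps the roles of $A$ and $B$ in the tensor product, and this works precisely because every small box carries the same, symmetric, uniform spreading pattern $\Pi$ in definition \eqref{eq:tensor1}. Once this symmetry is unpacked, (c) and (d) reduce to the grid-agreement and Lipschitz-truncation steps from the proof of Theorem~\ref{thm:bishuffle}, now carried out in the multivariate setting.
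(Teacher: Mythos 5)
Your proposal is correct and follows essentially the same route as the paper's proof: (a) via Lemma~\ref{lem:transformation} and the identity $A\otimes_m B=M_{A_m}(M_{B_m}(\Pi))$, (b) by matching the induced measures small box by small box using the translation property of $\pi_n$ and the uniform spreading via $\Pi$, and (c)--(d) by grid agreement followed by the $1$-Lipschitz estimate $2d/m$. Your version of (b) is marginally more explicit (checking arbitrary half-open sub-boxes rather than invoking uniform spreading after matching volumes of the small boxes), but the substance is identical.
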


  \begin{proof}
Property $(a)$ follows from the above discussion. Equations~\eqref{eq:pi} and \eqref{eq:tensor2} imply that
$$\mu_{\pi_n(A\otimes_m B)}({\boldsymbol\xi}_\mathbf{r}^\mathbf{s})=\mu_{A\otimes_m B}(\pi_n^{-1}({\boldsymbol\xi}_\mathbf{r}^\mathbf{s}))=\mu_{A\otimes_m B}({\boldsymbol\xi}_\mathbf{s}^\mathbf{r})=\mu_A({\boldsymbol\xi}_\mathbf{s})\mu_B({\boldsymbol\xi}_\mathbf{r})=\mu_{B\otimes_m A}({\boldsymbol\xi}_\mathbf{r}^\mathbf{s})$$
for all $\mathbf{r},\mathbf{s} \in [m]^d$. Furthermore,  Equations~\eqref{eq:tensor1} and \eqref{eq:pi} imply that both $B\otimes_m A$ and $\pi_n(A\otimes_m B)$ spread mass uniformly on ${\boldsymbol\xi}_\mathbf{r}^\mathbf{s}$. Hence, $(b)$ easily follows.
      To see $(c)$ let us compute the $(A\otimes_m B)$-volume of the big boxes ${\boldsymbol\xi}_\mathbf{r}$, $\mathbf{r} \in [m]^d$. Using Equation~\eqref{eq:tensor2} we get
    \[
        \mu_{A\otimes_m B} \left({{\boldsymbol\xi}_{\mathbf{r}}} \right)= \sum_{\mathbf{s} \in [m]^d}\mu_{A\otimes_m B} \left({{\boldsymbol\xi}_{\mathbf{r}}^{\mathbf{s}}} \right)= \sum_{\mathbf{s} \in [m]^d}{\mu_A\left({{\boldsymbol\xi}_{\mathbf{r}}}\right) \mu_B\left( {{\boldsymbol\xi}_\mathbf{s}}\right)} = \mu_A\left({{\boldsymbol\xi}_{\mathbf{r}}}\right)
    \]
    since the sum of $\mu_B({\boldsymbol\xi}_\mathbf{s})$ over all $\mathbf{s}\in [m]^d$ equals $1$. This implies
  \[
    (A\otimes_m B)\left(\dfrac{r_1}{m},\dfrac{r_2}{m},\ldots, \dfrac{r_d}{m}\right)= \sum_{\mathbf{s} \leqslant \mathbf{r}} \mu_{A\otimes_m B} \left({\boldsymbol\xi}_{\mathbf{s}}\right) =A\left(\dfrac{r_1}{m},\dfrac{r_2}{m},\ldots, \dfrac{r_d}{m}\right),
  \]
  where $\mathbf{r}=(r_1,r_2,\ldots,r_d)$ and $\mathbf{s} \leqslant \mathbf{r}$ if and only if $s_i \leqslant r_i$ for all $i \in [d]$.
So the values of $A\otimes_m B$ and $A$ are equal at all points $\frac{1}{m}{\boldsymbol r}$ for all $\mathbf{r}\in[m]^d$. This fact is clearly valid also if any entry of $\mathbf{r}$ equals zero.
Now let $\mathbf{u}=(u_1,u_2,\ldots,u_d) \in \II^d$ be arbitrary and find $\mathbf{r}\in([m]\cup\{0\})^d$ such that $\left| u_i-\frac{r_i}{m}\right|<\frac{1}{m}$ for all $i\in[d]$. Then
  \[\begin{split}
       |(A\otimes_m B)(\mathbf{u})-A(\mathbf{u})| & \leqslant\left|(A\otimes_m B)(\mathbf{u})-(A\otimes_m B)(\tfrac{1}{m}{\boldsymbol r})\right|  + \left|A(\tfrac{1}{m}{\boldsymbol r})-A(\mathbf{u})\right| \\
       & \leqslant \dfrac{d}{m}+ \dfrac{d}{m}
    \end{split}
  \]
  by the fact that both quasi-copulas $A\otimes_m B$ and $A$ have the 1-Lipschitz property in each direction. This finishes the proof of $(c)$
  .
  \end{proof}

As a direct consequence of Proposition~\ref{prop:tensor} we can now easily generalize Theorem~\ref{thm:bishuffle}.

\begin{theorem}\label{thm:multishuffle}
Let $Q_1$ and $Q_2$ be $d$-variate quasi-copulas and $\varepsilon>0$. Then there exist $d$-variate quasi-copulas $A_1$ and $A_2$ such that $\|Q_1-A_1\|_\infty <\varepsilon$, $\|Q_2-A_2\|_\infty<\varepsilon$, and $A_1$ is a shuffle of $A_2$.
\end{theorem}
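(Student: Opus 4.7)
The plan is to apply Proposition~\ref{prop:tensor} directly with $A=Q_1$ and $B=Q_2$. Concretely, I would set
\[
A_1 := Q_1 \otimes_m Q_2 \qquad \text{and} \qquad A_2 := Q_2 \otimes_m Q_1,
\]
where $m$ is a positive integer (and $n=m^2$) chosen large enough at the end of the argument.

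First, part $(a)$ of Proposition~\ref{prop:tensor} guarantees that $A_1$ and $A_2$ are genuine $d$-variate quasi-copulas, so both objects live in the correct ambient space. Next, part $(b)$ applied to the pair $(Q_1,Q_2)$ gives $\pi_n(A_1)=\pi_n(Q_1\otimes_m Q_2)=Q_2\otimes_m Q_1=A_2$, where $\pi_n$ is the shuffling map defined in \eqref{eq:pi}; since $\pi_n$ acts on $\II^d$ by the same piecewise-linear permutation in every coordinate, this exhibits $A_1$ as a shuffle of $A_2$ in the sense of the definition in Section~3 (with identical partitions and permutations in each of the $d$ coordinates).

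Finally, I would use parts $(c)$ and $(d)$ of Proposition~\ref{prop:tensor} to control the approximation errors: as $m\to\infty$ we have $A_1=Q_1\otimes_m Q_2 \to Q_1$ and $A_2=Q_2\otimes_m Q_1 \to Q_2$ in the supremum norm. Choosing $m$ large enough that both $\|Q_1-A_1\|_\infty<\varepsilon$ and $\|Q_2-A_2\|_\infty<\varepsilon$ hold simultaneously completes the proof. In fact, inspecting the bound $\frac{2d}{m}$ obtained in the proof of Proposition~\ref{prop:tensor}(c), it suffices to take any integer $m>\frac{2d}{\varepsilon}$.

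There is essentially no obstacle left at this stage: the construction of the tensor product $\otimes_m$, the verification that it is a quasi-copula, the shuffle identity, and the two convergence statements have all been done inside Proposition~\ref{prop:tensor}. The only subtlety worth double-checking is that the shuffle relating $A_1$ and $A_2$ uses the \emph{same} permutation in every coordinate, which is exactly the generalized notion of shuffle permitted by our Section~3 definition (and is the reason why that definition was deliberately chosen to allow shuffling in the first coordinate as well).
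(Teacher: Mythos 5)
Your proposal is correct and follows exactly the route the paper intends: the paper states this theorem as a direct consequence of Proposition~\ref{prop:tensor}, obtained by setting $A_1=Q_1\otimes_m Q_2$ and $A_2=Q_2\otimes_m Q_1$ and invoking parts $(a)$--$(d)$, which is precisely your argument (including the explicit bound $m>\frac{2d}{\varepsilon}$). The only cosmetic point is that $\pi_n(A_1)=A_2$ literally exhibits $A_2$ as a shuffle of $A_1$; since $\pi_n$ is an involution, Proposition~\ref{prop:tensor}$(b)$'s assertion that the two tensor products are shuffles of each other covers the direction required by the statement.
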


In particular, taking $Q_2=M$ we get the announced generalization of Theorem~\ref{thm:miku}.

\begin{theorem}
Let $\mathcal{Q}_d$ be the set of all $d$-variate quasi-copulas equipped with the uniform distance and $\mathcal{S}$ the set of shuffles of an arbitrarily small neighborhood of copula $M \in \mathcal{Q}_d$. Then $\mathcal{S} \cap \mathcal{Q}_d$ is a dense subset of $\mathcal{Q}_d$.
\end{theorem}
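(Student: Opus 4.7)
The plan is to apply Theorem~\ref{thm:multishuffle} directly. Indeed, the current statement is essentially a reformulation of that theorem in the special case in which one of the two quasi-copulas to be approximated is chosen to be the upper Fr\'{e}chet--Hoeffding bound $M$.

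Concretely, given an arbitrary $Q \in \mathcal{Q}_d$ and $\varepsilon > 0$, I would apply Theorem~\ref{thm:multishuffle} with $Q_1 := Q$ and $Q_2 := M$. This produces two $d$-variate quasi-copulas $A_1$ and $A_2$ satisfying $\|Q - A_1\|_\infty < \varepsilon$, $\|M - A_2\|_\infty < \varepsilon$, and such that $A_1$ is a shuffle of $A_2$.

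To finish, I unwind the definition of $\mathcal{S}$. Since $\|M - A_2\|_\infty < \varepsilon$, the quasi-copula $A_2$ sits in the $\varepsilon$-neighborhood of $M$ in $\mathcal{Q}_d$; because $\varepsilon > 0$ may be chosen arbitrarily small, $A_1$, being the shuffle of such an $A_2$, belongs to $\mathcal{S}$. As $A_1$ is also a quasi-copula, one has $A_1 \in \mathcal{S} \cap \mathcal{Q}_d$, and $\|Q - A_1\|_\infty < \varepsilon$ by construction. Since $Q \in \mathcal{Q}_d$ and $\varepsilon > 0$ were arbitrary, density of $\mathcal{S} \cap \mathcal{Q}_d$ in $\mathcal{Q}_d$ follows.

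There is no genuine obstacle in this final step: the substantive work --- the construction of the tensor product $Q \otimes_m M$, the verification via Lemma~\ref{lem:transformation} that it is a quasi-copula, the shuffle-symmetry $\pi_n(A \otimes_m B) = B \otimes_m A$ from Proposition~\ref{prop:tensor}(b), and the convergence $A \otimes_m B \to A$, $B \otimes_m A \to B$ from parts (c)--(d) --- has already been packaged into Theorem~\ref{thm:multishuffle}. The only conceptual point is to recognize the current theorem as precisely the announced generalization of Theorem~\ref{thm:miku}, which is made explicit by specializing $Q_2 = M$.
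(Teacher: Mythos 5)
Your proposal is correct and matches the paper exactly: the paper derives this theorem as an immediate corollary of Theorem~\ref{thm:multishuffle} by specializing $Q_2=M$, which is precisely your argument. The unwinding of the definition of $\mathcal{S}$ that you spell out is the only step the paper leaves implicit, and you handle it correctly.
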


\ns{
Note that 
Theorem~\ref{thm:bishuffle} is a special case of the bivariate Theorem~\ref{thm:multishuffle}. This case is interesting because $M$ is a copula as opposed to being a general quasi-copula.
Furthermore, bivariate copula $M$ has a stochastic interpretation of being the dependence structure that describes two comonotonic continuous random variables, i.e., random variables that are almost surely increasing functions of each other, see \cite[\S 2.5]{DuSe}.
The quasi-copula $B$ from the proof of Theorem~\ref{thm:bishuffle} is an ordinal sum of quasi-copulas so it can be represented as the supremum of ordinal sums of copulas.
In this case, the particular type of shuffles used in the proof of Theorem~\ref{thm:bishuffle} can be interpreted as indirect shuffles, as the following theorem explains. Here, we use $\leqslant$ and $\geqslant$ to denote the pointwise order on quasi-copulas, i.e., $A \leqslant B$ if and only if $A(u,v) \leqslant B(u,v)$ for all $u,v \in \II$.}

Note that the following theorem holds for bivariate quasi-copulas, as pointed above.

{\begin{theorem}\label{thm:stochastic}
Let $Q$ be an ordinal sum of quasi-copulas $\{Q_i\}_{i=1}^m$ with respect to the partition $\{K_i\}_{i=1}^m$ of $\II$. Furthermore, let $n=n_1+n_2+\ldots+n_m$ and let $\{J_i\}_{i=1}^n$ be a refinement of the partition $\{K_i\}_{i=1}^m$ such that for every $i \in [m]$ we have
$$K_i=\bigcup_{j=1}^{n_i} J_{\overline{n}_i+j}, \qquad \text{where } \overline{n}_i=n_1+n_2+\ldots+n_{i-1} .$$
Fix a permutation $\pi$ of the intervals $\{J_i\}_{i=1}^n$ that preserves the relative position of the intervals $\{J_{\overline{n}_i+j}\}_{j=1}^{n_i}$ for every $i \in [m]$ and denote the corresponding shuffling mechanism by $S$. For a quasi-copula $A$ denote by $A^{(S,S)}$ the shuffle of $A$, where we shuffle in both directions using $S$.
Then
$$Q^{(S,S)}(u,v)=\inf\{C^{(S,S)}(u,v) ~|~ C \geqslant Q\}$$
for all $u,v \in \II$.
\end{theorem}}

{\begin{proof}
Note that for a copula $C$ we have $C \geqslant Q$ if and only if $C$ is an ordinal sum of some copulas $\{C_i\}_{i=1}^m$ with respect to the partition $\{K_i\}_{i=1}^m$ such that $C_i \geqslant Q_i$ for all $i \in [m]$.
This is because $Q$ equals $M$ outside the union of squares $K_i \times K_i$, $i=1,2,\ldots,m$, hence, so does any copula $C$ with $C \geqslant Q$ and the rest is obvious.
Now fix $u,v \in \II$. Since $S$ preserves the relative positions of the intervals $\{J_{\overline{n}_i+j}\}_{j=1}^{n_i}$ for every $i \in [m]$, there exist $u_i,v_i \in \II$ such that
$$Q^{(S,S)}(u,v)=\frac{Q_1(u_1,v_1)}{\lambda(K_1)}+\frac{Q_2(u_2,v_2)}{\lambda(K_2)}+\ldots+\frac{Q_m(u_m,v_m)}{\lambda(K_m)},$$
where $\lambda(K_i)$ denotes the length of $K_i$.
Although an explicit formula for $u_i$ and $v_i$ could be given it is perhaps best to just illustrate them on a figure, see Figure~\ref{fig:relative}.
\begin{figure}[!ht]\ns{
\centering
\includegraphics{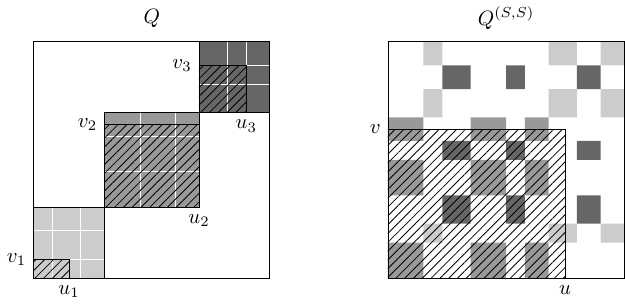}
\caption{The mass distribution of a quasi-copula $Q$ that is an ordinal sum of $3$ quasi-copulas (left) and the mass distribution of the corresponding two-way shuffle $Q^{(S,S)}$ (right) induced by the permutation
$\pi=\begin{pmatrix}
1&2&3&4&5&6&7&8&9\\
4&1&7&5&8&6&2&9&3
\end{pmatrix}$. The total mass in the stripped region on the right equals $Q^{(S,S)}(u,v)$ and the relative positions (within the respective squares) of the corresponding points $u_i$ and $v_i$ from the proof of Theorem~\ref{thm:stochastic} are depicted on the left.}
\label{fig:relative}}
\end{figure}
Similarly,
$$C^{(S,S)}(u,v)=\frac{C_1(u_1,v_1)}{\lambda(K_1)}+\frac{C_2(u_2,v_2)}{\lambda(K_2)}+\ldots+\frac{C_m(u_m,v_m)}{\lambda(K_m)},$$
for any $C \geqslant Q$.
This implies
\begin{align*}
\inf\{C^{(S,S)}(u,v) ~|~ C \geqslant Q\} &=
\inf\left\{\sum_{i=1}^m \frac{C_i(u_i,v_i)}{\lambda(K_i)} ~|~ C \geqslant Q\right\}
=\sum_{i=1}^m \inf\left\{\frac{C_i(u_i,v_i)}{\lambda(K_i)} ~|~ C_i \geqslant Q_i\right\} \\
&=\sum_{i=1}^m \frac{Q_i(u_i,v_i)}{\lambda(K_i)}=
Q^{(S,S)}(u,v),
\end{align*}
which completes the proof.
\end{proof}}

\ns{
Theorem~\ref{thm:stochastic} offers an indirect stochastic interpretation of the type of shuffles used in the proof of Theorem~\ref{thm:bishuffle}.
In particular, if an ordinal sum $Q$ of bivariate quasi-copula is interpreted as the lower envelope of the stochastic information contained in the set of copulas $\{C \text{ copula} ~|~ C \geqslant Q\}$, then $Q^{(S,S)}$ is the lower envelope of the shuffled information.
}

\section{Quasi-copula obeying a given signed mass pattern on a patch}\label{sec:patch+-}

\MO{This section is devoted to some special aspects of constructing quasi-copulas with respect to negative volumes. While these volumes are a nuisance for statistical interpretation, we get an amazing freedom in constructions, as discovered by Nelsen et al. in \cite[Theorem 4.2]{NeQuMoRoLaUbFl1}: Given any $\varepsilon>0$, no matter how small, and any $M > 0$, no matter how big, and any quasi-copula $Q$, there exists a quasi-copula $\widetilde{Q}$ and a set $S\subseteq\II^2$ such that: \emph{(a)} $\mu_{\widetilde{Q}}(S) < -M$, and \emph{(b)} $| Q(x,y) - \widetilde{Q}(x,y)| < \varepsilon$ for all $x,y \in \II$.}

\MO{However, our interest here does not lie so much in the magnitude of the negative volumes but more in how they are spread among the positive ones. It originates in a problem proposed by Montes et al. \cite{MoMiPeVi}: They call a pair of bivariate quasi-copulas $A\leqslant B$ an \emph{imprecise copula}, if they satisfy
\begin{enumerate}[$(a)$]
  \item $A(\mathbf{a}) + B(\mathbf{c}) - A(\mathbf{b}) - A(\mathbf{d}) > 0;$
  \item $B(\mathbf{a}) + A(\mathbf{c}) - A(\mathbf{b}) - A(\mathbf{d}) > 0;$
  \item $B(\mathbf{a}) + B(\mathbf{c}) - B(\mathbf{b}) - A(\mathbf{d}) > 0;$
  \item $B(\mathbf{a}) + B(\mathbf{c}) - A(\mathbf{b}) - B(\mathbf{d}) > 0;$
\end{enumerate}
for each rectangle defined by corners $\mathbf{a}, \mathbf{b}, \mathbf{c},$ and $\mathbf{d}$ in the standard way, i.e., starting from the south-west corner and continuing in positive orientation. Their question was whether for any imprecise copula in their sense there existed a true copula $C$ such that $A\leqslant C\leqslant B$. Their question was listed as the Hitchhiker's problem 6, and solved in the negative by the authors of this paper, our solution appearing in the same issue of the same journal \cite{OmSt1} as the 
list itself \cite{ArGaMeDeBa}. (This answer was making the definition of an imprecise copula proposed in \cite{MoMiPeVi} questionable.)
}

\MO{
The method characterising pairs of quasi-copulas $A\leqslant B$ such that a true copula $C$ exists with $A\leqslant C\leqslant B$, has been further developed and extended to the multivariate case in \cite{OmSt2,OmSt4}, and then called ALGEN method and used to solve also Hitchhiker's problem 3 in \cite{OmSt5}. Now, looking back from a five-year distance to our paper \cite{OmSt1}, we can see another ingredient in it that deserves to be developed further. The counterexample presented there was made of two quasi-copulas $A\leqslant B$ satisfying the above conditions of an imprecise copula such that there was no (!) copula $C$ with the property $A\leqslant C\leqslant B$. What was used there to create these quasi-copulas was an algorithm working tacitly behind the scenes, that 
distributes mass appropriately around a given patch; in other words, it creates quasi-copulas with a given pattern of plus and minus masses on a patch. Thus, we present here how this freedom in creating bivariate quasi-copulas works in general. One should be able to develop it further and extend it to the multivariate case in the future work.
}\\[1mm]

 Assume that on a certain rectangular patch $\PP$, divided into squares, masses are given obeying a fixed pattern of signs ``$+$'' and ``$-$''.
We want to extend the signed mass pattern to a square region containing $\PP$ in such a way that after a linear rescaling the obtained pattern will represent a mass distribution of some discrete quasi-copula.
For instance, if the pattern is given by masses
$\begin{pmatrix}
\mu & -\mu \\
-\mu & \mu
\end{pmatrix}$,
the total mass, i.e., then the volume of the patch is zero.
Besides the boundary condition, the most important conditions to fulfill are monotonicity and 1-Lipschitz condition in each direction.
These two conditions, however, can be replaced in the bivariate case by one equivalent geometric condition, expressed in terms of volumes. We require that all volumes of rectangles touching the boundary be nonnegative.
This means that in our example we need to add at least (in the language of matrices)
(1) mass $\mu$ south of the first column,
(2) mass $\mu$ north of the second column,
(3) mass $\mu$ east of the first row, and
(4) mass $\mu$ west of the second row, before anything else.

Assume that the patch $\PP$ is given by a rectangle $[0,m]\times[0,n]$ divided into small squares by lines $x=i$ and $y=j$ for $i\in[m]$ and $j\in[n]$, denoted by $R_{ij}=[i-1,i]\times[j-1,j]$. This notation of squares will be assumed theoretically on the whole plane, not just the patch.
The signed pattern is now defined by fixing arbitrary real values for the volumes $V(R_{ij})$. Observe that we might allow the total mass of the patch, i.e., the sum of all volumes, to be either negative, zero or positive. In order that the problem mentioned above is interesting, at least one of these volumes should be strictly negative. However, our investigation seems new and may have applications in the other cases as well. Observe also that neither the \emph{starting total mass of the patch}
\[
   V_0=\sum_{i=1}^{m}\sum_{j=1}^{n}V(R_{ij})
\]
nor the final total mass of the extended region, nor the final region itself are assumed bounded since we will rescale all of them later.

Here is our algorithm. For any $j\in[n]$ define
\[
    \mu^{h-}_j=-\min_{s\in[m]}\sum_{i=1}^{s}V(R_{ij})
\]
whenever the right-hand side is positive and $\mu^{h-}_j=0$ otherwise. Similarly, we let for any $j\in[n]$
\[
    \mu^{h+}_j=-\min_{s\in[m]}\sum_{i=s}^{m}V(R_{ij})
\]
whenever the right-hand side is positive and $\mu^{h+}_j=0$ otherwise.
These two values mean the respective maximal absolute value of a negative volume of a rectangle touching the left and the right boundary in the $j$-th row of the patch. When we add volume $\mu^{h-}_j$ on the left-hand side of the $j$-th row of the patch and volume $\mu^{h+}_j$ on the right-hand side of it, the total volume of the $j$-th row becomes
\[
    \mu^{h0}_j=\mu^{h-}_j + \sum_{i=1}^{m}V(R_{ij}) +\mu^{h+}_j.
\]
Of course, we also need for all $i\in[m]$ the corresponding values for all columns
\[
    \mu^{v-}_i=-\min_{s\in[n]}\sum_{j=1}^{s}V(R_{ij})\quad\mbox{and}\quad \mu^{v+}_i=-\min_{s\in[n]}\sum_{j=s}^{n}V(R_{ij})
\]
whenever the right-hand sides are positive and $0$ otherwise, as well as the corresponding total sums
\[
    \mu^{v0}_i=\mu^{v-}_i + \sum_{j=1}^{n}V(R_{ij}) +\mu^{v+}_i\quad\mbox{for all}\quad i\in[m].
\]
Finally, introduce the maximal row and column total sum
\[
    \mu=\max\left\{\max_{j\in[n]}\mu^{h0}_j,\max_{i\in[m]}\mu^{v0}_i\right\}
\]
Since our goal is a doubly stochastic signed mass pattern, this will be achieved by making the volumes of all rows and columns equal to $\mu$ and then normalizing. Observe that the so defined value $\mu$ is the smallest possible choice of the kind of value.

\begin{example}\label{ex:mass}
Let the signed mass pattern be given by
$$\left[\begin{array}{ccc}
0 & 0 & 3 \\
-2 & 1 & 0 \\
1 & -3 & 1
\end{array}\right],$$
where $i$ runs left-to-right and $j$ runs bottom-to-top.
Then $V_0=1$,
\begin{align*}
\mu^{h-}_1 &=2, &\mu^{h+}_1 &=2, &\mu^{v-}_1 &=1, &\mu^{v+}_1 &=2, &\mu^{h0}_1 &=3, &\mu^{v0}_1 &=2, \\
\mu^{h-}_2 &=2, &\mu^{h+}_2 &=1, &\mu^{v-}_2 &=3, &\mu^{v+}_2 &=2, &\mu^{h0}_2 &=2, &\mu^{v0}_2 &=3, \\
\mu^{h-}_3 &=0, &\mu^{h+}_3 &=0, &\mu^{v-}_3 &=0, &\mu^{v+}_3 &=0, &\mu^{h0}_3 &=3, &\mu^{v0}_3 &=4, 
\end{align*}
and $\mu=4$. 
\end{example}

The first step of the algorithm is to fulfill the \emph{Geometric condition} described in the first paragraph of this section. To this end we just insert positive masses in all four directions around the patch.
\begin{enumerate}[$(1)$]
\item \emph{Negative horizontal:} $V(R_{0,j})=\mu^{h-}_j$ for $j\in[n]$,
\item \emph{Positive horizontal:} $V(R_{n+1,j})=\mu^{h+}_j$ for $j\in[n]$,
\item \emph{Negative vertical:} $V(R_{i,0}) =\mu^{v-}_i$ for $i\in[m]$,
\item \emph{Positive vertical:} $V(R_{i,m+1})=\mu^{v+}_i$ for $i\in[m]$.
\end{enumerate}

\begin{example}\label{ex:geo}
For the mass pattern in Example~\ref{ex:mass} fulfilling the Geometric condition results in
$$\left[\begin{array}{ccc}
0 & 0 & 3 \\
-2 & 1 & 0 \\
1 & -3 & 1
\end{array}\right]
\longrightarrow
\left[\begin{array}{ccccc}
 & 2 & 2 & 0 &  \\
0 & \cc 0 & \cc 0 & \cc 3 & 0 \\
2 & \cc -2 & \cc 1 & \cc 0 & 1 \\
2 & \cc 1 & \cc -3 & \cc 1 & 2 \\
 & 1 & 3 & 0 & 
\end{array}\right].$$
\end{example}

The next step of the algorithm might be called \emph{Equalize and spread}. We want to achieve the doubly stochastic condition.
The idea is to set the volumes of all rows and columns of our enlarged patch to the same value and then normalize the volumes by dividing all the masses by the total mass. The minimal possible value that can be given this role is the above computed $\mu$.
In the rows and columns (going through the patch), that do not yet have total mass $\mu$, we may add the missing mass to the necessary positive masses on either side of each column and row.
There is a number of possible choices, say, we may add half of the missing mass on each side
\begin{align*}
&\mu^{h-}_j \rightsquigarrow x_j^-:=\mu^{h-}_j +\dfrac{\mu-\mu^{h0}_j}{2},
&\mu^{h+}_j &\rightsquigarrow x_j^+:=\mu^{h+}_j +\dfrac{\mu-\mu^{h0}_j}{2}, && \text{for all $j\in[n]$,}\\
&\mu^{v-}_i \rightsquigarrow y_i^-:=\mu^{v-}_i +\dfrac{\mu-\mu^{v0}_i}{2},
&\mu^{v+}_i &\rightsquigarrow y_i^+:=\mu^{v+}_i +\dfrac{\mu-\mu^{v0}_i}{2}, && \text{for all $i\in[m]$.}
\end{align*}
All these changes result in $\mu^{h0}_j \rightsquigarrow\mu$ and $\mu^{v0}_i \rightsquigarrow\mu$ for all $i\in[m]$ and $j\in[n]$.
The final step of the algorithm will depend on the choice of how we fork the missing mass among the positive and negative side of rows and columns. This part of the equalizing phase we call \emph{forking}.

\begin{example}\label{ex:fork}
For the mass patter from Example~\ref{ex:geo}, if we fork all additional mass to the west and north, we obtain
$$\left[\begin{array}{ccccc}
 & 2 & 2 & 0 &  \\
0 & \cc 0 & \cc 0 & \cc 3 & 0 \\
2 & \cc -2 & \cc 1 & \cc 0 & 1 \\
2 & \cc 1 & \cc -3 & \cc 1 & 2 \\
 & 1 & 3 & 0 & 
\end{array}\right]
\longrightarrow
\left[\begin{array}{ccccc}
 & 4 & 3 & 0 &  \\
1 & \cc 0 & \cc 0 & \cc 3 & 0 \\
4 & \cc -2 & \cc 1 & \cc 0 & 1 \\
3 & \cc 1 & \cc -3 & \cc 1 & 2 \\
 & 1 & 3 & 0 & 
\end{array}\right].$$
\end{example}

So, half of the equalize part of this step (i.e., on rows and columns going through the patch) is done after these (or equivalent) changes have been made.
An important part of the rest of it will be done with the spread of mass in all four directions around the patch. The spread part will take care of the fact that the new rows and columns (i.e., those that do not go through the patch) may have total mass larger than $\mu$, and is done as follows.
(1) \emph{Negative horizontal:} We go column by column, moving to the left, away from the patch.
If $\sum_{j=1}^{m} V(R_{0,j})\leqslant\mu$, we are done with spreading in this direction.
If not, there exists an $s\in[m]$ such that $$\displaystyle\nu=\sum_{j=1}^{s-1} V(R_{0,j})\leqslant\mu<\sum_{j=1}^{s} V(R_{0,j})$$
Here, we adopt an agreement that summation oven an empty set results in a zero sum.
In this case we introduce a new column to the left and we set \begin{enumerate}[(a)]
\item $V(R_{-1,j})=0$ and leave $V(R_{0,j})$ unchanged for $j\in[m],j<s$,
\item $V(R_{-1,s})=V(R_{0,s})+\nu-\mu$ and $V(R_{0,s})=\mu -\nu$, and
\item $V(R_{-1,j})=V(R_{0,j})$ and $V(R_{0,j})=0$ for $j>s$.
\end{enumerate}
This way the volume of each row remains the same but the part of the volume of the $0^\mathrm{th}$ column that exceeds $\mu$ moves to the $-1^\mathrm{st}$ column.
We proceed inductively adding column after column to the left and moving the rest of the mass further on until we reach the situation that all the vertical columns to the left of the patch have volume equal to $\mu$ except possibly the leftmost one, the volume of which may be somewhat smaller.
Now, we proceed by adjusting this algorithm in an obvious way in the three remaining directions as before:
(2) \emph{Positive horizontal}, (3) \emph{Negative vertical}, (4) \emph{Positive vertical}.

\begin{example}\label{ex:spread}
For the mass patter in Example~\ref{ex:fork}, we only need to spread mass west and north, and the spread step results in
$$\left[\begin{array}{ccccc}
 & 4 & 3 & 0 &  \\
1 & \cc 0 & \cc 0 & \cc 3 & 0 \\
4 & \cc -2 & \cc 1 & \cc 0 & 1 \\
3 & \cc 1 & \cc -3 & \cc 1 & 2 \\
 & 1 & 3 & 0 & 
\end{array}\right]
\longrightarrow
\raisebox{0.2cm}{
$\left[\begin{array}{cccccc}
 &  & 0 & 3 & 0 &  \\
 &  & 4 & 0 & 0 &  \\
1 & 0 & \cc 0 & \cc 0 & \cc 3 & 0 \\
3 & 1 & \cc -2 & \cc 1 & \cc 0 & 1 \\
0 & 3 & \cc 1 & \cc -3 & \cc 1 & 2 \\
 &  & 1 & 3 & 0 & 
\end{array}\right]$}.
$$
\end{example}

The final step of the algorithm depends on how the equalize and spread step resolves. The most optimal case, when all columns and rows end up with total mass precisely $\mu$, is considered in the following example.

\begin{example}\label{ex:V0} Consider the case that $V_0=0$ and that all rows and columns added in the equalize and spread step of our algorithm have the same total mass $\mu$. Furthermore, assume that the algorithm added $m'$ new (nonempty) rows and $n'$ new (nonempty) columns (note that it can happen that, say, all $x_j^-$ are zero, in which case we have not added any mass in the west column).
Let us compute the volume of the horizontal band section (exactly) containing the patch. On one hand, it contains $m$ rows of mass $\mu$, so that its total mass equals $m\mu$.
On the other hand it contains the central mass zero together with $n'$ added columns of mass $\mu$ which results in $n'\mu$.
So, we get $n'=m$ and through the volume of the vertical band section we conclude similarly that $m'=n$.
Consequently, our procedure results in a square region, that we can now rescale to obtain a mass distribution of a quasi-copula: divide the sides of the obtained square $(m'+n)\times (n'+m)$ by $N=m'+n=n'+m$ and move it so that the southwest corner comes into the origin, and divide all the masses by $N\mu$ to get the total mass to $1$.
\end{example}

In general, the implementation of the final step of our algorithm will depend on the starting mass of the patch $V_0$ and the forking of mass when determining the ``new'' values of $\mu^{h-}_j,\mu^{h+}_j$ and $\mu^{v-}_i, \mu^{v+}_i$ denoted respectively by $x^{-}_j,x^{+}_j$ and $y^{-}_i, y^{+}_i$ for all $i\in[m]$ and $j\in[n]$.
Denote by $\rho$ the remainder of the division of $V_0$ by $\mu$, i.e.,  $\rho=V_0-\mu\left\lfloor \frac{V_0}{\mu}\right\rfloor$, and observe that $0\leqslant \rho<\mu$.
Let us first consider a special case of successful forking which will help us understand the general case. We think of $x^{-}_j,x^{+}_j$ and $y^{-}_i, y^{+}_i$ for all $i\in[m]$ and $j\in[n]$ as $2(m+n)$ unknowns subject to constraints
\begin{alignat*}{2}
   \mu^{h-}_j  & \leqslant x_j^-,\quad  \mu^{h+}_j&\leqslant x_j^+,\quad x_j^-+\sum_{i=1}^{m}V(R_{ij})+x_j^{+}=&\mu\ \ \mbox{for}\ \ j\in[n]\\
   \mu^{v-}_i  & \leqslant y_i^-,\quad \mu^{v+}_i&\leqslant y_i^+,\quad   y_i^-+\sum_{j=1}^{n}V(R_{ij})+y_i^{+}=&\mu\ \ \mbox{for}\ \ i\in[m].
\end{alignat*}
It is possible to express half of the unknowns and replace each pair of inequalities by a double inequality:
\begin{equation*}
  \mu^{h-}_j  \leqslant x_j^-
  \leqslant \mu-\mu_j^{h+}-\sum_{i=1}^{m}V(R_{ij})
  \quad\mbox{and}\quad\mu^{v-}_i  \leqslant y_i^-\leqslant \mu-\mu_i^{v+}-\sum_{j=1}^{n}V(R_{ij}).
\end{equation*}
The special case is defined by the following two conditions: \textbf{(A)} $\rho>0$, and \textbf{(B)} there exist solutions of the inequalities above such that sums $\sum_{j=1}^{n}x_j^-$ and $\sum_{i=1}^{m}y_i^-$ are both integer multiples of $\mu$. Clearly, Condition \textbf{(B)} implies that the columns appended in the spreading phase of the algorithm in the negative horizontal direction each contains mass $\mu$. The same holds for the negative vertical direction. Now, the total mass added in the positive horizontal direction equals
\[
    \sum_{j=1}^{n}x_j^+=n\mu-\sum_{j=1}^{n}x_j^--V_0=\hat{n} \mu -\rho=(\hat{n}-1)\mu+(\mu-\rho),
\]
for some integer $\hat{n} \geqslant 1$. So, in the spreading phase our algorithm will stop with a column of mass $\mu-\rho$. The same conclusion holds for the positive vertical direction. In order to finish the algorithm we only need to add mass $\rho$ which is positive by Condition \textbf{(A)} in the very northeast square of our extended region. So, we have proven most of Theorem \ref{final} for this case.

\begin{example}\label{ex:final}
For the mass pattern in Example~\ref{ex:spread} we have $\mu=4$, $\rho=1$, and
\begin{align*}
x^-_1 &=3, &x^+_1 &=2, &y^-_1 &=1, &y^+_1 &=4, \\
x^-_2 &=4, &x^+_2 &=1, &y^-_2 &=3, &y^+_2 &=3, \\
x^-_3 &=1, &x^+_3 &=0, &y^-_3 &=0, &y^+_3 &=0, 
\end{align*}
so that $\sum_{j=1}^{n}x_j^-=8$ and $\sum_{i=1}^{m}y_i^-=4$ are both integer multiples of $\mu$ as in the special case above (i.e. the spreading resulted in full east and south column).
Thus, the final step of the algorithm gives
$$\left[\begin{array}{cccccc}
 &  & 0 & 3 & 0 &  \\
 &  & 4 & 0 & 0 &  \\
1 & 0 & \cc 0 & \cc 0 & \cc 3 & 0 \\
3 & 1 & \cc -2 & \cc 1 & \cc 0 & 1 \\
0 & 3 & \cc 1 & \cc -3 & \cc 1 & 2 \\
 &  & 1 & 3 & 0 & 
\end{array}\right]
\raisebox{-0.2cm}{$\longrightarrow$}
\left[\begin{array}{cccccc}
 &  & 0 & 3 & 0 & \cc 1 \\
 &  & 4 & 0 & 0 &  \\
1 & 0 & \cc 0 & \cc 0 & \cc 3 & 0 \\
3 & 1 & \cc -2 & \cc 1 & \cc 0 & 1 \\
0 & 3 & \cc 1 & \cc -3 & \cc 1 & 2 \\
 &  & 1 & 3 & 0 & 
\end{array}\right],$$
and we fill the rest with zeros.
\end{example}

We are now in position to treat the rest of the cases depending on the result after forking and spreading. As seen before, a solution for forking always exists. Denote  $\displaystyle S_h^-= \sum_{j=1}^{n}x_j^-$, $\displaystyle S_h^+= \sum_{j=1}^{n}x_j^+$, $\displaystyle S_v^-= \sum_{i=1}^{m}y_i^-$, and $\displaystyle S_v^+= \sum_{i=1}^{m}y_i^+$.
We can assume that all four sums are strictly positive (so that none of the outer rows and columns is empty), since the only case when this cannot be achieved, is the case when the Geometric condition is already satisfied, and there is no mass to fork, i.e., all rows and columns already have the same mass.
In this case we just leave the patch as is.
Each of the following five conditions \textbf{(C1)} $S_h^-=\mu\left\lfloor \dfrac{S_h^-}{\mu}\right\rfloor$,  \textbf{(C2)} $S_h^+=\mu\left\lfloor \dfrac{S_h^+}{\mu}\right\rfloor$,  \textbf{(C3)} $S_v^-=\mu\left\lfloor \dfrac{S_v^-}{\mu}\right\rfloor$,  \textbf{(C4)} $S_v^+=\mu\left\lfloor \dfrac{S_v^+}{\mu} \right\rfloor$, and \textbf{(C5)} $\rho=0$, can be either true or false which amounts to 32 cases.
Fortunately, the solution of a case can easily be implemented to the case in which we exchange directions in either horizontal or vertical dimension or both. For instance, the above solution of the special case gives rise to solutions in three more cases amounting to mass $\rho$ ending up in either the very northwest, southwest, or southeast corner. Also, some of the 32 cases are not feasible.

Denote by $\rho_h^-$, $\rho_h^+$, $\rho_v^-$, $\rho_v^+$ the respective remainders of the division of $S_h^-$, $S_h^+$, $S_v^-$, $S_v^+$ by $\mu$.
Let us compute the volume of the horizontal band exactly containing the patch. On one hand, when computing sums of rows, it equals $n\mu$, on the other hand, if we compute the sums of columns, we get $S_h^-+V_0+S_h^+$, which equals $\rho_h^-+\rho+\rho_h^+$ modulo $\mu$.
Since this must be divisible by $\mu$, we are left with only two options: either $\rho_h^-+\rho_h^+=\mu-\rho$ or $\rho_h^-+\rho_h^+=2\mu-\rho$. Similarly, for the vertical direction, we have either
$\rho_v^-+\rho_v^+=\mu-\rho$ or $\rho_v^-+\rho_v^+=2\mu-\rho$.

Assume first that conditions \textbf{(C1)}--\textbf{(C4)} are all false, i.e., $\rho_h^-$, $\rho_h^+$, $\rho_v^-$, $\rho_v^+$ are all positive, and denote by
$\gamma_{--},\gamma_{-+},\gamma_{++},$ and $\gamma_{+-}$ the tentative masses of the southwest, southeast, northeast, and northwest corner of the extended region.
Consider the case when $\rho_h^-+\rho_h^+=\mu-\rho$ and $\rho_v^-+\rho_v^+=\mu-\rho$.
If we set $\gamma_{--}=\min\{\rho_h^+,\rho_v^+\}$, we get necessarily $\gamma_{+-}=\mu -\rho_h^--\min\{\rho_h^+,\rho_v^+\}\geqslant\mu -\rho_h^--\rho_h^+=\rho \geqslant 0$, and similarly, $\gamma_{-+}=\mu -\rho_v^--\min\{\rho_h^+,\rho_v^+\} \geqslant 0$. Clearly, both masses are also smaller than $\mu$.
Now, the mass in the last corner can be computed in two ways, both giving the same result $\gamma_{++}=\rho_h^- -\rho_v^+ +\min\{\rho_h^+,\rho_v^+\}= \rho_v^- -\rho_h^+ +\min\{\rho_h^+,\rho_v^+\}$ as a short computation reveals.
It is also easy to see that the value of the mass in this corner equals $\min\{\rho_h^-,\rho_v^-\}$ thus resulting in a solution of our problem with the desired properties.

Next consider the case when $\rho_h^-+\rho_h^+=2\mu-\rho$ and $\rho_v^-+\rho_v^+=2\mu-\rho$.
In order to find the solution in this case let us set $\gamma_{--}=0$, then $\gamma_{+-}=\mu -\rho_h^-$ and $\gamma_{-+}=\mu  -\rho_v^-$ and all these values belong to $[0,\mu)$. The fourth corner brings us to the same value either way
$\gamma_{++}=\rho_h^--\rho_v^+=\rho_v^--\rho_h^+$.
If the obtained value is positive we are done since it is clearly smaller than $\mu$.
If not, we start by letting $\gamma_{++}=0$, get  $\gamma_{+-}=\mu -\rho_v^+$ and $\gamma_{-+}=\mu  -\rho_h^+$, and finally $\gamma_{--}=\rho_v^+-\rho_h^-=\rho_h^+-\rho_v^-\geqslant0$ and we are done again.

Finally, consider the case when, say, $\rho_h^-+\rho_h^+=\mu-\rho$ and $\rho_v^-+\rho_v^+=2\mu-\rho$.
In this case we need to add an additional row, say, to the north of the extended region. Denote the tentative masses of the west and east square of this row by $\gamma'_{+-}$ and $\gamma'_{++}$.
If we set $\gamma_{--}=\gamma_{+-}=0$, then $\gamma_{-+}=\mu-\rho_v^-$, $\gamma_{++}=\mu-\rho_v^+$, and $\gamma'_{+-}=\mu-\rho_h^-$.
We can calculate $\gamma'_{++}$ in two ways giving the same value $\gamma'_{++}=\rho_h^-=\mu-(\mu-\rho_v^+)-\rho_h^--(\mu-\rho_v^-)$. Note that all these values are positive.

\begin{example}
We give an example of a signed mass pattern and consequent forking to the north, that results in the situation where an additional row needs to be added.
\begin{align*}
\left[\begin{array}{cccccc}
1 & -2 & 4 & -1
\end{array}\right]
&\longrightarrow
\left[\begin{array}{cccccc}
 & 0 & 2 & 0 & 1 & \\
1 & \cc 1 & \cc -2 & \cc 4 & \cc -1 & 1 \\
 & 0 & 2 & 0 & 1 & \\
\end{array}\right]
\longrightarrow
\left[\begin{array}{cccccc}
 & 3 & 4 & 0 & 4 & \\
1 & \cc 1 & \cc -2 & \cc 4 & \cc -1 & 1 \\
 & 0 & 2 & 0 & 1 & \\
\end{array}\right]\\
&\longrightarrow
\raisebox{0.4cm}{$\left[\begin{array}{cccccc}
 & 0 & 0 & 0 & 3 & \\
 & 0 & 3 & 0 & 1 & \\
 & 3 & 1 & 0 & 0 & \\
1 & \cc 1 & \cc -2 & \cc 4 & \cc -1 & 1 \\
 & 0 & 2 & 0 & 1 & \\
\end{array}\right]$}
\longrightarrow
\raisebox{0.6cm}{$\left[\begin{array}{cccccc}
\cc 3 &  &  &  &  & \cc 1\\
 & 0 & 0 & 0 & 3 & \cc 1\\
 & 0 & 3 & 0 & 1 & \\
 & 3 & 1 & 0 & 0 & \\
1 & \cc 1 & \cc -2 & \cc 4 & \cc -1 & 1 \\
 & 0 & 2 & 0 & 1 & \cc 1\\
\end{array}\right]$}.
\end{align*}
Note, however, that if we forked the mass differently we could have ended in one of the previous cases, where no additional row is needed. For example
\begin{align*}
\left[\begin{array}{cccccc}
1 & -2 & 4 & -1
\end{array}\right]
&\longrightarrow
\left[\begin{array}{cccccc}
 & 0 & 2 & 0 & 1 & \\
1 & \cc 1 & \cc -2 & \cc 4 & \cc -1 & 1 \\
 & 0 & 2 & 0 & 1 & \\
\end{array}\right]
\longrightarrow
\left[\begin{array}{cccccc}
 & 1 & 4 & 0 & 4 & \\
1 & \cc 1 & \cc -2 & \cc 4 & \cc -1 & 1 \\
 & 2 & 2 & 0 & 1 & \\
\end{array}\right]\\
&\longrightarrow
\raisebox{0.2cm}{$\left[\begin{array}{cccccc}
 & 0 & 0 & 0 & 1 & \\
 & 0 & 1 & 0 & 3 & \\
 & 1 & 3 & 0 & 0 & \\
1 & \cc 1 & \cc -2 & \cc 4 & \cc -1 & 1 \\
 & 2 & 2 & 0 & 0 & \\
 & 0 & 0 & 0 & 1 & \\
\end{array}\right]$}
\longrightarrow
\raisebox{0.2cm}{$\left[\begin{array}{cccccc}
\cc 2 & 0 & 0 & 0 & 1 & \cc 1\\
 & 0 & 1 & 0 & 3 & \\
 & 1 & 3 & 0 & 0 & \\
1 & \cc 1 & \cc -2 & \cc 4 & \cc -1 & 1 \\
 & 2 & 2 & 0 & 0 & \\
\cc 1 & 0 & 0 & 0 & 1 & \cc 2\\
\end{array}\right]$}.
\end{align*}
\end{example}

Next, assume that only one of the conditions \textbf{(C1)}--\textbf{(C4)} is true and that the other conditions are false,
say $\rho_h^-=0$, $\rho_h^+,\rho_v^-,\rho_v^+>0$. Then automatically $\rho>0$, since $\rho_h^-+\rho+\rho_h^+$ is divisible by $\mu$. Furthermore, $\rho_h^-+\rho_h^+=\mu-\rho$, since the left side is $\rho_h^+<\mu$.
So we are left with two cases depending on the value of $\rho_v^-+\rho+\rho_v^+$.
Assume first that $\rho_v^-+\rho_v^+=2\mu-\rho$.
Since $\rho_h^-=0$, this means that the west column is full or empty and we must put $\gamma_{--}=\gamma_{+-}=0$ (if the column is empty, we remove it, since it will not be part of the final region). Consequently,
$\gamma_{-+}=\mu-\rho_v^-$ and $\gamma_{++}=\mu-\rho_v^+$.
Then the sum of the right column is $2\mu-\rho_v^--\rho_v^++\rho_h^+=\mu$, as needed.

Now, assume that $\rho_v^-+\rho_v^+=\mu-\rho$. In this case we need to add an additional column, say to the east of the extended region. Denote the tentative masses of its south and north square by $\gamma'_{-+}$ and $\gamma'_{++}$.
We again have $\gamma_{--}=\gamma_{+-}=0$, and we put additionally $\gamma_{-+}=0$.
Then we have $\gamma'_{-+}=\mu-\rho_v^-$, $\gamma_{++}=\mu-\rho_h^+$, and $\gamma'_{++}=\rho_h^+-\rho_v^+ \geqslant 0$, since $\rho_h^+=\rho_h^-+\rho_h^+=\mu-\rho=\rho_v^-+\rho_v^+$ by assumption.
Note that $\gamma'_{-+}+\gamma'_{++}=\mu+\rho_h^+-\rho_v^+-\rho_v^-=\mu$ as needed.

Observe that as long as the condition \textbf{(C5)} is false we cannot have more than one zero remainder in either of the two directions, i.e., conditions \textbf{(C1)} and \textbf{(C2)} cannot simultaneously be true and conditions \textbf{(C3)} and \textbf{(C4)} cannot simultaneously be true.
Thus, the only remaining case under the assumption $\rho>0$ is the special case solved in the very beginning. So, we can assume from now on that $\rho=0$.
In both directions we now have either a zero remainder on each side, or a pair $\rho_h^->0$ and $\rho_h^+=\mu-\rho_h^->0$, and similarly for the vertical direction.
If both directions have zero sides, we arrive at the solution described in the Example~\ref{ex:V0}.
If both directions have nonzero sides, we are in the case already considered above.
So the last case to consider is the case when one direction has zero sides and the other one does not, say $\rho_h^-=\rho_h^+=0$ and $\rho_v^-,\rho_v^+>0$.
In this case both east and west columns are full or empty (we remove the column if it is empty), and $\rho_v^-+\rho_v^+=\mu$, since this is positive and must be divisible by $\mu$.
We need to add an additional column, say to the east of the extended region and put mass $\mu-\rho_v^+$ into its north square and mass $\mu-\rho_v^-$ into its south square. The total mass of this column is then equal to $\mu$, as needed.

We have thus added mass in the extended region so that all added squares have nonnegative mass and the total mass of every row and column is $\mu$. 
If $N$ is the total number of rows and $M$ the total number of columns, then the total mass in the extended region is $N\mu=M\mu$, counting by rows and columns, so that $N=M$ and the region is a square.
In addition, we have added enough positive mass in each row and column to fulfill the Geometric condition so that the resulting mass pattern will correspond to a quasi-copula after normalization.
This finishes the proof of the following theorem.

\begin{theorem}\label{final}
  Let a signed mass pattern on a patch $\PP=[0,m] \times [0,n]$ be given.
  \begin{enumerate}[(a)]
    \item There exist integers $h^- \leqslant 0$, $v^-\leqslant0$, $h^+\geqslant m$ and $v^+\geqslant n$ such that $h^+-h^-=v^+-v^-=:N$, and such that the big square $\mathds{G}=\{(x,y) \mid h^- \leqslant x \leqslant h^+,\ v^- \leqslant y \leqslant v^+\}$ consists of $N^2$ small squares $R_{ij}$ with side length $1$.
    \item Moreover, there exists a spread of masses on the small squares $V(R_{ij})$ with $h^- \leqslant i \leqslant h^+$ and $v^- \leqslant j \leqslant v^+$, such that $$\sum_{k=v^-}^{v^+}V(R_{ik})=\sum_{k=h^-}^{h^+} V(R_{kj})=:\mu \quad \text{for all $i$ and $j$},$$
    and the masses of squares in the patch $\PP$ are equal to the masses prescribed by the signed mass pattern. 
    \item In addition, if we divide the sides of the big square $\mathds{G}$ by $N$, move its southwest corner to the origin, and divide all the masses by $N\mu$, we get a mass distribution of some discrete quasi-copula.
  \end{enumerate}
\end{theorem}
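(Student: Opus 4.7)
The plan is to formalize the algorithm described in the paragraphs preceding the theorem and verify that each stage terminates with all newly introduced masses nonnegative and with every row and column of the final enlarged region summing to the same value $\mu$. The construction proceeds in three phases: $(i)$ the Geometric condition, which prepends the masses $\mu^{h\pm}_j$ and $\mu^{v\pm}_i$ on the four sides of the patch so that every rectangle touching the new boundary has nonnegative volume; $(ii)$ the Equalize-and-spread phase, which enlarges these boundary masses to $x^\pm_j, y^\pm_i$ so that each of the $n$ rows and $m$ columns through the patch sums exactly to $\mu$, and then transfers any excess mass outside the patch into successively added rows or columns; and $(iii)$ a corner-filling phase that assigns nonnegative values to the outer corners, and when necessary to one extra row or column, in order to balance the last remaining row and column.

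Phase $(i)$ is immediate since $\mu^{h\pm}_j$ and $\mu^{v\pm}_i$ are by definition nonnegative. Phase $(ii)$ is also routine: the choice $\mu=\max\{\max_j \mu^{h0}_j,\max_i \mu^{v0}_i\}$ makes the forking equations solvable with nonnegative $x^\pm_j,y^\pm_i$, and the spreading step is essentially a Euclidean division, terminating when the outermost column or row contains the remainder $\rho_h^\pm,\rho_v^\pm\in[0,\mu)$ of $S_h^\pm, S_v^\pm$ modulo $\mu$; at each step the transferred mass is nonnegative by the choice of the transfer index $s$. Thus the only genuine difficulty is phase $(iii)$.

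The main obstacle is the case analysis driven by the five conditions \textbf{(C1)}--\textbf{(C5)}, which in the naive count would yield $32$ subcases. I would reduce these by first exploiting the horizontal and vertical reflection symmetries, which collapse essentially distinct cases by a factor of four. Next, the double-counting identity $S_h^- + V_0 + S_h^+ = n\mu$ obtained by summing the horizontal band through the patch by rows and by columns, together with its vertical counterpart, forces $\rho_h^- + \rho_h^+ \in\{\mu-\rho,\,2\mu-\rho\}$ and $\rho_v^- + \rho_v^+\in\{\mu-\rho,\,2\mu-\rho\}$. Moreover, when $\rho>0$ one cannot have both \textbf{(C1)} and \textbf{(C2)} hold simultaneously, and similarly for \textbf{(C3)} and \textbf{(C4)}, so most residual subcases collapse. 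For each remaining configuration I would assign the corner masses $\gamma_{\pm\pm}$ explicitly, introducing one extra row or column precisely when the horizontal and vertical residues lie in different classes, verify that every such assignment lies in $[0,\mu)$, and check that every row and column of the resulting extended region sums to $\mu$.

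Part $(a)$ then follows from a final double-counting observation: the total mass of the extended rectangle equals $(\text{number of rows})\cdot\mu$ when computed row-by-row and $(\text{number of columns})\cdot\mu$ when computed column-by-column, so these counts agree and the region is a square of side $N$. For part $(c)$, the rescaling is routine: after dividing coordinates by $N$ and masses by $N\mu$, phase $(ii)$ certifies that the rescaled function is doubly stochastic, while the Geometric condition, which in the bivariate case is equivalent to the combined monotonicity and $1$-Lipschitz conditions, certifies that the rescaled discrete function is a quasi-copula.
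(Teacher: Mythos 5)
Your proposal follows essentially the same route as the paper: the Geometric condition, the equalize-and-spread phase with forking and Euclidean-division-style spreading, the residue identities $\rho_h^-+\rho_h^+\in\{\mu-\rho,\,2\mu-\rho\}$ and $\rho_v^-+\rho_v^+\in\{\mu-\rho,\,2\mu-\rho\}$, the symmetry-reduced corner-filling case analysis, and the final double-counting argument showing the extended region is a square, exactly as in the paper's development preceding the theorem. One minor inaccuracy: the extra row or column is not needed \emph{precisely} when the horizontal and vertical residues lie in different classes (for instance, when $\rho_h^-=0$ and $\rho_h^+,\rho_v^-,\rho_v^+>0$, the paper adds an extra column in the subcase where both classes equal $\mu-\rho$), but since you defer to an explicit verification of each configuration, this does not affect the substance of the argument.
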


\ns{Let us conclude this section with a few remarks. As already pointed out, the value $\mu$ in the algorithm is optimized to be as small as possible, which implies that the eventual size of the patch in the final quasi-copula (after rescaling) is optimized to be as large as possible.
The main takeaway is that any signed mass pattern can be rescaled and extended to a mass distribution of some discrete quasi-copula. The resulting quasi-copula is not unique, since we could spread the additional mass in many other different ways, and we could even add some new negative mass if we wanted. If the starting mass pattern contains only positive mass then our algorithm produces a discrete copula, since we only added positive mass.}

\section{Concluding remarks}

\ns{
In the paper we presented several novel methods for obtaining quasi-copulas, which demonstrate whether we have more or less freedom in constructing quasi-copulas versus similar methods for constructing copulas. There are several other constructions of quasi-copulas available in the literature. For example, multivariate Archimedean quasi-copulas and super-modular quasi-copulas were considered in \cite{ArGaMeDeBa2}, while multivariate Bertino quasi-copulas are investigated in \cite{ArGaMeDeBa3}. For some other families of quasi-copulas we refer the reader to \cite{ArGaMeDeBa}.
}

In Section~\ref{sec:mot} we have shown that there are no other quasi-copulas then those that we obtain by taking infima and suprema of copulas. So every multivariate quasi-copula can be constructed using sets of copulas. Now, the set of quasi-copulas is richer than the set of copulas, but more restricted for shuffling. The deep reason behind this phenomenon observed here may be the fact that quasi-copulas do not induce a $\sigma$-additive signed measure in general \cite{NeQuMoRoLaUbFl3}. Nevertheless, when considering approximations with shuffles in Sections~\ref{sec:shuff} and \ref{sec:gen} in a sophisticated way, it turned out that we still have enough freedom to get interesting approximation results.

To conclude:
The bivariate method presented in Section 5 for constructing a quasi-copula from a given mass pattern is useful particularly for constructing examples and counterexamples of quasi-copulas. An extension of this method to the case of multivariate quasi-copulas is one of the major problems proposed in this paper. Another important incentive of ours is our novel approach to tensor product of copulas given in Section \ref{sec:gen}. But perhaps the most significant challenge introduced here are the shuffles of quasi-copulas.

\bibliographystyle{amsplain}

\begin{thebibliography}{100}






\bibitem{ArGaMeDeBa} J.J.~Arias-Garc\'{i}a, R.~Mesiar, B.~De Baets, \textsl{A hitchhiker’s guide to quasi-copulas}, Fuzzy Sets and Systems, \textbf{393} (2020), 1--28.

\bibitem{ArGaMeDeBa2} J.J.~Arias-Garc\'{i}a, R.~Mesiar, B.~De Baets, \textsl{The unwalked path between quasi-copulas and copulas: stepping stones in higher dimensions}, Internat. J. Approx. Reason.,
\textbf{80} (2017), 89--99.

\bibitem{ArGaMeDeBa3} J.J.~Arias-Garc\'{i}a, R.~Mesiar, B.~De Baets, \textsl{Multivariate {B}ertino copulas}, J. Math. Anal. Appl., \textbf{434} (2016), 1346--1364.


\bibitem{DeBaDeMe} B.~De Baets, H.~De Meyer, \emph{Orthogonal grid constructions of copulas}, IEEE Transactions on Fuzzy Systems \textbf{15} (2007), 1053--1062.

\bibitem{DeBaMeUbFl} B.~De Baets, H.~De Meyer, M.~\'{U}beda-Flores, \emph{Extremes of the mass distribution associated with a trivariate quasi-copula}, Comptes Rendus Mathematique \textbf{344} (2007), 587--590.

\bibitem{DuFeSa} F.~Durante, J.~Fern\'{a}ndez-S\'{a}nchez, \emph{Multivariate shuffles and approximation of copulas}, Statistics \& Probability Letters \textbf{80} (2010), 1827--1834.

\bibitem{DuFeSaTr} F.~Durante, J.~Fern\'{a}ndez-S\'{a}nchez, W. Trutschnig, \emph{Baire category results for quasi-copulas}, Depend.~Model., \textbf{4} (2016) 215--223.

\bibitem{DuSaSe} F.~Durante, P.~Sarkoci, C.~Sempi, \textsl{Shuffles of copulas}, J. Math. Anal. Appl. 352 (2009) 914–-921.


\bibitem{DuSe} F. Durante and C. Sempi. Principles of Copula Theory. CRC/Chapman \& Hall, Boca Raton, 2015.

\bibitem{FeSaNeUbFl} J.~Fern\'{a}ndez-S\'{a}nchez, R.~Nelsen, M.~\'{U}beda-Flores, \emph{Multivariate copulas, quasi-copulas and lattices}, Stat.~Probab.~Lett., \textbf{81} (2011), 1365--1369.

\bibitem{FeSaQuMoUbFl} J.~Fern\'{a}ndez-S\'{a}nchez, J.J.~Quesada-Molina, M.~\'{U}beda-Flores, \emph{Supports of quasi-copulas}, Fuzzy Sets and Systems
\textbf{467} (2023), 108486.

\bibitem{FeSaRoLaUbFl} J.~Fern\'{a}ndez-S\'{a}nchez, J.A.~Rodr\'{i}guez-Lallena, M.~\'{U}beda-Flores, \emph{Bivariate quasi-copulas and doubly stochastic signed measures}, Fuzzy Sets and Systems \textbf{168} (2011), 81--88.

\bibitem{FeSaTr} J.~Fern\'{a}ndez-S\'{a}nchez, W.~Trutschnig, \emph{Conditioning-based metrics on the space of multivariate copulas and their interrelation with uniform and levelwise convergence and Iterated Function Systems}, J. Theor. Probab. \textbf{28} (2015), 1311--1336.


\bibitem{GrFeSaTr} F.~Griessenberger, J.~Fernández Sánchez, W. Trutschnig, qtextsl{Some properties of double shuffles of bivariate copulas and (extreme) copulas invariant with respect to Lüroth double shuffles}, Fuzzy Sets and Systems \textbf{428} (2022), 102--120.

\bibitem{Klemetal} E.P.~Klement, D.~Kokol Bukov\v{s}ek, M.~Omladi\v{c}, S.~Saminger-Platz, N.~Stopar, \emph{Multivariate copulas with given values at two arbitrary points}, Stat. Papers \textbf{64} (2023), 2015--2055.

\bibitem{KoBuKoOmSt} D.~Kokol Bukov\v{s}ek, T.~Ko\v{s}ir, M.~Omladi\v{c}, N.~Stopar, \emph{Extending sub-quasi-copulas}, J.~Math.~Anal.~Appl. \textbf{500} (2021), 125099.

\bibitem{MeSe} R.\ Mesiar, C.\ Sempi, \emph{Ordinal sums and idempotents of copulas}, Aequat. Math. \textbf{79} (2010), 39--52.

\bibitem{MiShTa} P.~Mikusi\' nski, H.~Sherwood, M.D.~Taylor, \textsl{Shuffles of Min}, Stochastica \textbf{13} (1992), 61--74.

\bibitem{MiTa} P.~Mikusi\'{n}ski, M.D.~ Taylor, \emph{Some approximations of n-copulas}, Metrika \textbf{72} (2010), 385--414.

\bibitem{MoMiPeVi}  I. Montes, E. Miranda, R. Pelessoni, P. Vicig, \emph{Sklar’s theorem in an imprecise setting}, Fuzzy Sets Syst. \textbf{278} (2015) 48–-66.

\bibitem{Nels} R.\ B.\ Nelsen, An introduction to copulas, 2nd edition, Springer-Verlag, New York (2006).

\bibitem{NeQuMoRoLaUbFl1} R.B. Nelsen, J.J. Quesada-Molina, J.A. Rodr\'{i}guez-Lallena, M. \'{U}beda-Flores, \emph{Some new properties of quasi-copulas. In: Distributions with Given Marginals and Statistical Modelling (C.M. Cuadras, J. Fortiana, and J.A. Rodr\' iguez-Lallena, edis.)}, Kluwer Academic Publishers, Dordrecht 2002, pp.\ 187--194.


\bibitem{NeQuMoRoLaUbFl3} R.B.~Nelsen, J.J.~Quesada-Molina, J.A.~Rodr\'{i}guez-Lallena, M.~\'{U}beda-Flores, \emph{Quasi-copulas and signed measures}, Fuzzy Sets and Systems \textbf{161} (2010), 2328--2336.


\bibitem{NeUbFl2} R.B.~Nelsen, M.~\'{U}beda-Flores, \emph{The lattice-theoretic structure of sets of bivariate copulas and quasi-copulas}, C.~R.~Acad.~Sci.~Paris, S\'{e}r.~I, Math., \textbf{341} (2005), 583--586.

\bibitem{OmSt1} M. Omladi\v{c}, N. Stopar, \emph{Final solution to the problem of relating a true copula to an imprecise copula}, Fuzzy Sets and Systems \textbf{393} (2020), 96--112.


\bibitem{OmSt2}  M.\ Omladi\v{c}, N.\ Stopar, \emph{A full scale Sklar's theorem in the imprecise setting}, Fuzzy Sets and Systems \textbf{393} (2020), 113--125.


\bibitem{OmSt4} M.\ Omladi\v{c}, N.\ Stopar, \emph{Multivariate imprecise Sklar type theorems}, Fuzzy Sets and Systems \textbf{428} (2022), 80--101.

\bibitem{OmSt5} M.~Omladi\v c, N.~Stopar, \emph{Dedekind-MacNeille completion of multivariate copulas via ALGEN method}, Fuzzy Sets and Systems \textbf{441} (2022), 321--334.

\bibitem{QuMoSe} J. J. Quesada Molina, C. Sempi, \emph{Discrete quasi-copulas}, Insurance: Mathematics and Economics \textbf{37} (2005), 27--41.


\bibitem{Skla} A.\ Sklar, \emph{Fonctions de r\'{e}partition \`{a} $n$ dimensions et leurs marges}, Publ.\ Inst.\ Stat.\ Univ.\ Paris \textbf{8} (1959) 229--231.

\bibitem{St} N. Stopar, \emph{Bivariate measure-inducing quasi-copulas}, Fuzzy Sets and Systems \textbf{492} (2024), 109068.

\bibitem{Tru} W.~Trutschnig, \emph{Some results on the convergence of (quasi-) copulas}, Fuzzy Sets and Systems \textbf{191} (2012), 113--121.

\bibitem{TrFeSa} W.~Trutschnig, J.~Fern\'{a}ndez-S\'{a}nchez, \emph{Idempotent and multivariate copulas with fractal support}, J. Stat. Plan. Infer. \textbf{142} (2012), 3086--3096.

\bibitem{TrFeSa2} W.~Trutschnig, J.~Fern\'{a}ndez S\'{a}nchez, \textsl{Some results on shuffles of two-dimensional copulas}, Journal of Statistical Planning and Inference \textbf{143} (2013), 251--260.

\bibitem{Vi} R.A.~Vitale, \textsl{Approximation by mutually completely dependent processes}, Journal of Approximation Theory \textbf{66} (1991), 225--228.

\end{thebibliography}

\end{document}